\newcommand{\intwitt}{W_{O_K}^a(O_{\bar{K}})}
\newcommand{\C}{\mathscr{C}}
\newcommand{\modular}{\mathfrak{F}}
\newcommand{\diag}{\mathrm{diag}}
\renewcommand{\exp}{{\bf e}}
\newcommand{\proj}{{\bf P}}
\newcommand{\tr}{\mathrm{Tr}}
\newcommand{\idp}{\mathfrak{p}}
\newcommand{\idf}{\mathfrak{f}}
\newcommand{\ida}{\mathfrak{a}}
\newcommand{\idb}{\mathfrak{b}}
\newcommand{\idc}{\mathfrak{c}}
\newcommand{\ratf}{\mathbb{Q}}
\newcommand{\comp}{\mathbb{C}}
\newcommand{\realf}{\mathbb{R}}
\newcommand{\upper}{\mathfrak{H}}
\newcommand{\Ad}{\mathbb{A}}
\newcommand{\F}{\mathrm{F}}
\newcommand{\End}{\mathrm{End}}
\newcommand{\Auto}{\mathrm{Aut}}
\newcommand{\imm}{\mathrm{Im}}
\newcommand{\Hom}{\mathrm{Hom}}
\newcommand{\integer}{\mathbb{Z}}
\newcommand{\idd}{\mathfrak{d}}
\theoremstyle{plain}
\newtheorem{thm}{Theorem}[subsection]
\newtheorem{lem}[thm]{Lemma}%[subsection]
\newtheorem{prop}[thm]{Proposition}%[subsection]
\newtheorem{cor}[thm]{Corollary}%[subsection]
\theoremstyle{definition}
\newtheorem{defn}[thm]{Definition}
\newtheorem{rem}[thm]{Remark}
\numberwithin{equation}{section}
\title{Semi-galois Categories IV:\\ A deformed reciprocity law for Siegel modular functions}
\author{Takeo Uramoto\\ Institute of Mathematics for Industry, Kyushu University}
\date{}
\begin{document}
\maketitle
%\linenumbers
%%%%%%%%%%%%%%%%%%%%%%%%%%%%%%%%%%
\begin{abstract}
\noindent
This paper is a sequel to our previous work \cite{Uramoto20}, where we proved the \emph{``modularity theorem'' for algebraic Witt vectors} over imaginary quadratic fields. This theorem states that, in the case of imaginary quadratic fields $K$, the algebraic Witt vectors over $K$ are precisely those generated by the \emph{modular vectors} whose components are given by special values of deformation family of Fricke modular functions; arithmetically, this theorem implies certain congruences between special values of modular functions that are not necessarily galois conjugate. In order to take a closer look at this modularity theorem, the current paper extends it to the case of CM fields. The main results include (i) a construction of algebraic Witt vectors from special values of deformation family of Siegel modular functions on Siegel upper-half space $\upper_g$ given by ratios of theta functions, and (ii) a galois-theoretic characterization of which algebraic Witt vectors arise in this modular way, intending to exemplify a general galois-correspondence result which is also proved in this paper. 

%{\flushleft{{\bf Keywords:} Siegel modular functions; deformed reciprocity law; algebraic Witt vectors; semi-galois category.}}

%{\flushleft{{\bf 2020 AMS Subject Classifications:} 11R37, 11G15, 13F35.}}
\end{abstract}

%%%%%%%%%%%%%%%%%%%%%%%%%%%%%%%%%%
\section{Introduction}
\label{s1}
The goal of this paper is to extend our previous work on the \emph{modularity theorem for algebraic Witt vectors} over imaginary quadratic fields (Theorem 4.3.1, \cite{Uramoto20}) to the case of CM fields in particular. This theorem states that, if $K$ is an imaginary quadratic field, the $K$-algebra $E_K = K \otimes \intwitt$ of \emph{algebraic Witt vectors} over $K$ is identical to the $K$-algebra $M_K= K[\widehat{f}_a \mid a \in \ratf^2/\integer^2]$ generated by the \emph{modular vectors} $\widehat{f}_a$, whose components are given by special values of Fricke's modular functions $f_a: \upper \rightarrow \comp$ ($a \in \ratf^2/\integer^2$), cf.\ \S \ref{s2}. This theorem connects  two objects (algebraic/analytic) of different nature; and technically this theorem is proved using \emph{Shimura's reciprocily law}, which relates special values of \emph{galois-conjugate} modular functions (\S 6.8 \cite{Shimura}). But compared to Shimura's reciprocity law, our modularity theorem is characterized by the fact that it relates, by congruences, special values of modular functions living at different levels that are \emph{not necessarily galois conjugate}; in this sense, our modularity theorem strengthens/deforms Shimura's reciprocity law in a way analogous to \emph{Hida theory}, e.g.\ mod-$p$ congruences between Fourier coefficients of non-galois-conjugate cusp forms \cite{Mazur}. In view of that Shimura's reciprocity law is now known for more general Shimura varieties \cite{Milne, Milne_Shih}, our modularity theorem would eventually be a facet of some more general theme. To pursue this prospect, the current paper provides further results of that general direction, working on the case of Siegel modular functions.

Our proof of the modularity theorem $E_K = M_K$ consisted of three major steps; the extension to CM field in this paper will basically follow this line. The first step is to see that we can identify the algebraic Witt vectors $\xi \in E_K$ with locally constant, $K^{ab}$-valued, $G_K$-equivariant functions on the \emph{profinite Deligne-Ribet monoid} $DR_K$ (cf.\ Corollary 3.1.6 \cite{Uramoto20}), formally, $E_K \simeq \Hom_{G_K}(DR_K, K^{ab})$; this identification is a consequence of the fact that the algebraic Witt vectors $\xi \in E_K$ generate all \emph{galois objects} \cite{Uramoto17} of the semi-galois category $(\C_K, \F_K)$ of certain $\Lambda$-rings whose fundamental monoid $\pi_1(\C_K, \F_K)$ was known isomorphic to $DR_K$ \cite{Borger_Smit11}; this identification itself holds for arbitrary number fields $K$ and thus is used in the current paper too. The second step is to construct algebraic Witt vectors using special values of the family $\{f_a\}$ of Fricke's modular functions $f_a: \upper \rightarrow \comp$ indexed by $a \in \ratf^2/\integer^2$; thanks to the first step, the second step is reduced to constructing locally constant, $K^{ab}$-valued, $G_K$-equivariant functions on $DR_K$ from these $f_a$'s, for which the adelic description of the Deligne-Ribet monoid $DR_K$ by Yalkinoglu \cite{Yalkinoglu13}, $DR_K \simeq \widehat{O}_K \times_{\widehat{O}_K^\times} (\Ad_{K,f}^\times/K^\times)$, plays an auxiliary role. The third step is to prove that these algebraic Witt vectors constructed from $f_a$'s (namely the modular vectors $\widehat{f}_a \in E_K$) actually generate $E_K$ over $K$; this last part is our key step, in which we utilized the \emph{galois-orbit decompositions} of finite Deligne-Ribet monoids $DR_\idf$ for conductors $\idf \in I_K$ given as $DR_\idf = \coprod_{\idd \mid \idf} C_{\idf/\idd}$ \cite{Yalkinoglu13} in order to separate components of modular vectors (= special values of Fricke functions $f_a$'s that may live at different levels), reducing it to Boolean-valued problems of judging prime ramifications (cf.\ the proof of Lemma 4.3.2 \cite{Uramoto20} in particular). 

As mentioned above too, the current work also follows basically this line; but in the case of CM fields, major differences occur in the second and third steps. First, we shall replace Fricke modular functions $f_a: \upper \rightarrow \comp$ with certain \emph{ratios $\theta^{k/l}_a$ of theta functions}, which naturally define deformation families of modular functions on the Siegel upper-half space $\upper_g$; nevertheless, the basic idea behind these two families of modular functions is essentially the same, in that both families of functions (Fricke functions and ratios of theta functions) are those equally appearing in the construction of projective embedding of complex tori (resp.\ of dimension one and higher dimension) \cite{Shimura, Mumford}, which will be a central step in our construction of modular vectors; essentially, their values (= components of modular vectors) both represent the coordinates of torsion points on elliptic curves and abelian varieties respectively. Our major subject in the second step is to give the right setting of necessary constructions and a proof that the ratios of theta functions naturally produce locally constant $K^{ab}$-valued $G_K$-equivariant functions on $DR_K$, hence, algebraic Witt vectors (i.e.\ \emph{modular vectors}) over CM fields $K$ (cf.\ \S \ref{s3}). 

Second, unlike the case of imaginary quadratic fields, these modular vectors do not generate all algebraic Witt vectors over CM fields; thus it should be of natural concern to characterize which algebraic Witt vectors are generated by our modular vectors. The subject of \S \ref{s4} is to discuss this problem, where we show that an algebraic Witt vector $\xi \in E_K$ is generated by the modular vectors if and only if it is periodic with respect to a certain monoid congruence $\sim_N$ of finite index on $DR_K$ for some $N \geq 1$, where the congruences $\sim_N$ are given purely in an adelic term. Technically speaking, this result is a major result of the current paper and intended to exemplify a general \emph{galois-theoretic correspondence} (mentioned in Remark 3.1.8 \cite{Uramoto20} without proof) between (i) $\Lambda$-subalgebras of $E_K$, (ii) profinite quotients of $DR_K$, and (iii) semi-galois full subcategories of $(\C_K, \F_K)$; a proof of this general galois correspondence is also provided in \S \ref{s4}. 

It would be possible to extend the results of the current paper to more general situations, say, along the line of general Shimura varieties \cite{Milne}; in this relation, compare our results to Yalkinoglu's \cite{Yalkinoglu12} and Ha and Paugam's work \cite{Ha_Paugam}; in terms of general Shimura varieties the current work concerns the case of Siegel modular varieties (= Shimura varieties for general symplectic group) in particular.  In this paper, however, we do not pursue such full generality yet; instead we shall focus on the most basic case where the common/different points of CM fields to the case of imaginary quadratic fields \cite{Uramoto20} become transparent.

\paragraph{Related works}
The current work is motivated by Yalkinoglu's work \cite{Yalkinoglu12} (2012) in particular. In that paper, Yalkinoglu constructed an arithmetic subalgebra of the Bost-Connes system for number fields containing CM fields using Siegel modular functions on Siegel modular varieties (\S 7 \cite{Yalkinoglu12}); and if we restrict his arithmetic subalgebra (consisting of certain functions on a \emph{BCM groupoid}; cf.\ \S 6 \cite{Yalkinoglu12}) to the subspace of its objects (which is isomorphic to $DR_K$), we can in fact obtain a subalgebra of $E_K$ under the identification $E_K = \Hom_{G_K}(DR_K, K^{ab})$; in other words, up to this identification, his construction of arithmetic subalgebra can be also understood as a construction of algebraic Witt vectors over CM fields from special values of Siegel modular functions; this way, Yalkinoglu's work \cite{Yalkinoglu12} is relevant to our major goal in the current paper. 

Comparing our construction to Yalkinoglu's \cite{Yalkinoglu12}, however, highlights the above mentioned fact that our result can be regarded as a deformation of Shimura's reciprocity law. On the one hand, the algebraic Witt vectors arising from Yalkinoglu's construction, viewed as functions on $DR_K$, have their support only on the unit group $DR_K^\times = G_K^{ab} \subset DR_K$ (\S 7 \cite{Yalkinoglu12}), while those of our construction have their support on the whole $DR_K$ (cf.\ \S \ref{s3}--\S \ref{s4}). Technically speaking, it is due to this aspect that our algebraic Witt vectors can relate special values of Siegel modular functions living at different levels that are not necessarily galois conjugate, unlike Shimura's reciprocity law only relates special values of galois-conjugate functions. 

To be more specific, an algebraic Witt vector $\xi \in E_K$, viewed as a function on $DR_K$, is locally constant and thus factors through a finite quotient $DR_K \twoheadrightarrow DR_\idf$ for some conductor $\idf$; this finite monoid $DR_\idf$ has a natural action of its unit group $DR_\idf^\times$ isomorphic to the ray class group $C_\idf$, and then, has a $C_\idf$-orbit decomposition of the form $DR_\idf = \coprod_{\idd \mid \idf} C_{\idf/\idd}$; the $G_K$-equivariance of $\xi$ together with $K^{ab}$-valuedness implies that $\xi$ is a function $\coprod_{\idd} C_{\idf/\idd} \rightarrow K^{ab}$ such that, when restricted on each component $\xi|_{C_{\idf/\idd}}: C_{\idf/\idd} \rightarrow K^{ab}$, its value $\xi(\sigma)$ at $\sigma \in C_{\idf/\idd}$ is equal to $\xi(1)^\sigma$ and thus determined only by its value at the unit $1 \in C_{\idf/\idd}$. This way, the values of $\xi$ within a single component $C_{\idf/\idd}$ are correlated by galois conjugate; but $\xi$ being an algebraic Witt vector, the values of $\xi$ on the whole $DR_\idf = \coprod_\idd C_{\idf/\idd}$ (i.e.\ those on different components too) must be correlated so that the distribution of the values of $\xi$ on the whole $DR_\idf = \coprod_\idd C_{\idf/\idd}$ are \emph{arithmetically smooth} (i.e.\ infinitely differentiable with respect to arithmetic derivatives) up to fixed denominator (i.e.\ $\xi$ multiplied by some integer gets arithmetically smooth). In this sense, an algebraic Witt vector $\xi$ is a data consisting, generally, not only of galois-conjugate values but connecting (or smoothly \emph{interpolating}) non-galois conjugate values too, where ``smooth'' is in the sense of arithmetic derivative. 
Phrased in these specific terms, the algebraic Witt vectors arising from Yalkinoglu's construction consist only of galois-conjugate values (= special values of galois conjugate Siegel modular functions) by his definition, and thus, in the above general sense, do not give interpolation of special values of non-galois-conjugate modular functions; on the other hand, our construction induces such interpolations in a natural way.\footnote{That said, it should be also recognized that Yalkinoglu's work has his own goal that belongs to a natural research context on Bost-Connes systems, and for this goal, his construction was sufficient; our above argument is just to see that, reviewing Yalkinoglu's work in the above specific way, our research subject (interpolations of special values of non-galois-conjugate Siegel modular functions) emerges naturally; solidifying this subject itself, as a sequel to our previous work \cite{Uramoto20}, is also a major theme of the current work.}

Technically speaking, it seems characteristic that, in our construction of algebraic Witt vectors (i.e.\ modular vectors) by special values of Siegel modular functions (living at different levels), moduli interpretations of Siegel modular varieties (or Siegel upper-half space $\upper_g$) play an intrinsic key role, which is also in a sharp contrast to Yalkinoglu's work. In fact, as mentioned above, our modular vectors over CM fields (including imaginary quadratic fields) are constructed by choosing specific generators of the field $\modular$ of modular functions, namely, Fricke functions $f_a$ ($a \in \ratf^2/\integer^2$) and ratios $\theta^{k/l}_a$ ($a \in \ratf^{2g}/\integer^{2g}$) of theta functions (cf.\ \S \ref{s3}); and these specific modular functions have natural moduli interpretation that they represent coordinates of torsion points of (projectively embedded) elliptic curves and abelian varieties. Based on this choice of generator, we can deform modular functions by deforming the indices $a \in \ratf^{2g}/\integer^{2g}$ (cf.\ \S \ref{s3}) of the generators (i.e.\ $f_a$ and $\theta^{k/l}_a$), which correspond to descending the levels of torsion points of abelian varieties; our modular vectors are constructed by special values of thus deformed modular functions. In a sense, the reason why the values of our modular vectors $\xi: \coprod_{\idd \mid \idd} C_{\idf/\idd} \rightarrow K^{ab}$ can be correlated even among different components is that their values are controlled uniformly by this well-behaved deformation family $\theta^{k/l}_a$; in fact we can even characterize what algebraic Witt vectors are generated by our modular vectors, by geometry of abelian varieties, thanks to their geometric origin (cf.\ \S \ref{s4}).
% In our previous case of imaginary quadratic fields \cite{Uramoto20}, the modular vectors $\widehat{f}_a$ were constructed from Fricke's modular functions $f_a: \upper \rightarrow \comp$; 

In the case of imaginary quadratic fields, the current work also overlaps with our own previous work \cite{Uramoto20}; in particular, we now have two classes of modular vectors, i.e.\ those by Fricke functions $f_a$ and those by the ratios $\theta^{k/l}_a$ of theta functions. Although the latter cannot generate all algebraic Witt vectors over CM field of higher degree in general, we can see by our result in \S \ref{s4} that, at least in the case of imaginary quadratic fields, the latter too can generate all algebraic Witt vectors, as in the former case \cite{Uramoto20}; so, in view of the fact that the latter makes sense for any CM fields, modular vectors by the ratios of theta functions have a theoretical advantage/canonicity; in fact, our choice of ratios of theta functions for general CM case is a natural consequence of Mumford's general theory of projective embeddings of abelian varieties \cite{Mumford} and Shimura's theory of complex multiplication \cite{Shimura_abelian, Shimura_arithmeticity}. Also as Iwao Kimura has told us in the context of our work \cite{scss2021} with Yasuhiro Ishitsuka on the decidability of integral Witt vectors over imaginary quadratic fields, numerical computations of special values of Fricke functions can be made fast by representing them by theta series \cite{Johansson_Enge_Hart}. In view of this, the modular vectors by ratios of theta functions may also have a computational advantage even for the case of imaginary quadratic fields only. 
Also, in the case of CM fields, Streng \cite{Streng} has developed some computational results on Shimura's reciprocity law for Siegel modular functions, which is also relevant to the current work (\S \ref{s3}--\S \ref{s4}); in particular, Streng's result may be helpful for concrete numerical computations on our general theory of modular vectors developed below. 

\paragraph{Acknowledgements}
This paper is an extended version of our talk at \emph{Low Dimensional Topology and Number Theory XIV} on 27--30, March 2023. We are grateful to the organizer of the workshop for inviting us there; we are also grateful to Yasuhiro Ishitsuka and Iwao Kimura for several discussions on \cite{scss2021}. This work was supported by JSPS KAKENHI Grant-No.22K03248. 
% long version: This paper is an extended version of our talk at \emph{Low Dimensional Topology and Number Theory XIV} on 27--30, March 2023, held at Kyushu University. We are grateful to the organizer of the workshop for inviting us there; we are also grateful to Yasuhiro Ishitsuka and Iwao Kimura for several discussions on \cite{scss2021}, where Ishitsuka told us the idea of division polynomials to estimate the denominators of special values of Fricke functions, and Kimura told us the reference \cite{Johansson_Enge_Hart} to make their numerical computations more efficient using theta series. Moreover we also thankful to the editors of two journals, whose editorial rejections led us to consider why rejected and notice an elementary error in Lemma 4.2.4, \cite{Uramoto23}, which we shall repair below.  This work was supported by JSPS KAKENHI Grant-No.22K03248. 

%%%%%%%%%%%%%%%%%%%%%%%%%%%%%%%%%%
\section{Preliminaries}
\label{s2}
Throughout this paper, $K$ denotes a number field, and $O_K$ the ring of integers in $K$; we denote by $P_K$ the set of non-zero prime ideals of $O_K$, while $I_K$ denotes the monoid of non-zero integral ideals of $O_K$; $N\ida$ denotes the absolute norm of $\ida \in I_K$. Let $\Ad_K$ be the adele ring of $K$, $\Ad_{K, f}$  the finite adeles, and $\widehat{O}_K$ the finite integral adeles; the Artin map is denoted $[-]: \Ad_K^{\times} \rightarrow G_K^{ab}$, where $G_K^{ab}$ is the galois group of the maximal abelian extension $K^{ab}$ over $K$; for a ring $R$ with unit, $R^\times$ denotes the invertible elements in $R$. Note that this notation is different from that in \cite{Uramoto20}, where we denoted $R^*$ to mean the invertible elements of $R$; in this paper, the superscript $*$ is reserved for the \emph{reflex fields} $E^*$ for CM fields with CM types $(E, \Phi)$ (cf.\ \S \ref{s3.2}). 

In this paper the reader is assumed to be familiar with the terminologies, notations, and main results in our previous works \cite{Uramoto16, Uramoto17, Uramoto18, Uramoto20}. In particular, $\intwitt$ denotes the $\Lambda$-ring of integral Witt vectors over $O_K$, while $E_K = K \otimes \intwitt$ denotes the $\Lambda$-ring of algebraic Witt vectors over $K$. Also, $(\C_K, \F_K)$ denotes the semi-galois category of finite etale $\Lambda$-rings over $K$ having integral models, whose fundamental monoid $\pi_1(\C_K, \F_K)$ is isomorphic to the Deligne-Ribet monoid $DR_K$ \cite{Borger_Smit11}.

%%%%%%
\subsection{Modular vectors}
\label{s2.1}
This subsection is devoted to an overview of our previous construction of modular vectors (\S 4 \cite{Uramoto20}), rephrasing it in terms of elliptic curves and their Kummer varieties in particular. This rephrase is to highlight the geometric ingredient of modular vectors, and to compare them to the case of CM fields given in \S \ref{s3}, where elliptic curves (i.e.\ abelian varieties of dimension one) are replaced with abelian varieties of higher dimension (cf.\ \S \ref{s2.2}). 

%\paragraph{Witt vectors as functions on $DR_K$}
\subsubsection{Witt vectors as functions on $DR_K$}
As outlined in \S \ref{s1}, the construction of modular vectors is based on the fact that we can identify the algebraic Witt vectors $\xi \in E_K$ with locally constant $K^{ab}$-valued $G_K$-equivariant functions on the profinite Deligne-Ribet monoid $DR_K$, which holds for arbitrary number fields $K$ (\S 3 \cite{Uramoto20}). Since we need this identification in this paper too, let us start with recalling some results on it:

\begin{defn}[Deligne-Ribet monoid $DR_K$]
The \emph{(profinite) Deligne-Ribet monoid} $DR_K$ is defined as the inverse limit of finite quotient monoids $I_K / \sim_\idf =: DR_\idf$ given for each $\idf \in I_K$, where $\sim_\idf$ is the monoid congruence of finite index on the monoid $I_K$ defined by, for $\ida, \idb \in I_K$, 
\begin{eqnarray}
 \ida \sim_\idf \idb &\Leftrightarrow& \exists t \in K_+ \cap (1 + \idf \idb^{-1}), \hspace{0.2cm} \ida \idb^{-1} = (t),
\end{eqnarray}
where $K_+$ denotes totally positive elements in $K$. 
Note that, for $\idf \mid \idf'$, we have a natural surjective monoid homomorphism $DR_{\idf'} \twoheadrightarrow DR_\idf$, with which $DR_\idf$'s constitute an inverse system of finite monoids; with this inverse system $\{DR_\idf\}_{\idf \in I_K}$ of \emph{finite Deligne-Ribet monoids $DR_\idf$ for conductors $\idf \in I_K$}, the profinite Deligne-Ribet monoid $DR_K$ is given by the inverse limit: 
\begin{eqnarray}
 DR_K &:=& \lim_{\leftarrow \idf \in I_K} DR_\idf
\end{eqnarray}
\end{defn}

By definition we have a canonical monoid homomorphism $I_K \rightarrow DR_K$, which is in fact injective; also, it is known \cite{Deligne_Ribet} that the unit group $DR_\idf$ of each $DR_\idf$ is isomorphic to the strict ray class group $C_\idf$ for the conductor $\idf$, namely $DR_\idf^\times \simeq C_\idf$; and hence, that $DR_K^\times \simeq \lim_\idf C_\idf \simeq G_K^{ab}$, where the last isomorphism is the one given by class field theory. Therefore, the absolute galois group $G_K$ naturally acts on $DR_K$ (say, from the right) via the maximal abelian quotient $G_K \twoheadrightarrow G_K^{ab} \simeq DR_K^\times \subseteq DR_K$, with which $DR_K$ forms a profinite right $G_K$-set. Thanks to this fact, it makes sense to consider locally constant $K^{ab}$-valued \emph{$G_K$-equivariant} functions on $DR_K$, the set of which we shall denote by $\Hom_{G_K}(DR_K, K^{ab})$. 

This set $\Hom_{G_K}(DR_K, K^{ab})$ has a natural $K$-algebra structure with respect to the point-wise operations, as well as the following operation $\psi_\idp: \Hom_{G_K}(DR_K, K^{ab}) \rightarrow \Hom_{G_K}(DR_K, K^{ab})$ for each $\idp \in P_K$, which is given by multiplication-by-$\idp$, regarding each $\idp \in P_K$ as an element in $DR_K$ via the above-mentioned injection $I_K \hookrightarrow DR_K$:
\begin{eqnarray}
 \Hom_{G_K}(DR_K, K^{ab}) &\xrightarrow{\psi_\idp}& \Hom_{G_K}(DR_K, K^{ab}) \\
 \xi(*) &\longmapsto& \xi(\idp \cdot *)
\end{eqnarray}
With these operators $\psi_\idp$, the $K$-algebra $\Hom_{G_K}(DR_K, K^{ab})$ forms a $\Lambda$-ring over $K$, which in fact is isomorphic to the $\Lambda$-ring $E_K$ of algebraic Witt vectors in a canonical way:

\begin{thm}[Corollary 3.1.6, \cite{Uramoto20}]
\label{identification}
We have a canonical isomorphism of $\Lambda$-rings over $K$:
\begin{eqnarray}
 E_K &\simeq& \Hom_{G_K}(DR_K, K^{ab}). 
\end{eqnarray}
\end{thm}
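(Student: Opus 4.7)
The plan is to construct a canonical $\Lambda$-ring homomorphism $\Phi: E_K \to \Hom_{G_K}(DR_K, K^{ab})$ and show it is an isomorphism, exploiting the semi-galois duality between $(\C_K, \F_K)$ and finite $DR_K$-sets that is implicit in the identification $\pi_1(\C_K, \F_K) \simeq DR_K$ of \cite{Borger_Smit11}. Given $\xi \in E_K = K \otimes \intwitt$, the $\Lambda$-subring $A_\xi \subset E_K$ it generates is a finite \'etale $\Lambda$-$K$-algebra admitting an integral model, hence an object of $\C_K$. Its fiber $\F_K(A_\xi)$ is then a finite quotient $DR_K$-set equipped with a compatible $G_K$-action arising from Galois descent along a chosen embedding $A_\xi \hookrightarrow K^{ab}$. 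This embedding identifies each element of $A_\xi$ with a $G_K$-equivariant $K^{ab}$-valued function on $\F_K(A_\xi)$, which pulls back to a locally constant $G_K$-equivariant function on $DR_K$; letting $\xi$ vary and passing to the colimit over the $A_\xi$ yields the map $\Phi$.

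Second, I would verify that $\Phi$ is a $\Lambda$-homomorphism. Pointwise addition and multiplication come for free, since evaluation of Witt vectors at $\bar{K}$-points is a ring homomorphism. The key compatibility concerns the Frobenius operators: the Adams operation $\psi_\idp$ on $E_K$ must correspond to the translation $\eta(\cdot) \mapsto \eta(\idp \cdot)$ on $\Hom_{G_K}(DR_K, K^{ab})$. This is precisely the content of $\pi_1(\C_K, \F_K) \simeq DR_K$ at the level of monoid actions: the action of $\idp \in I_K \hookrightarrow DR_K$ on every fiber set is realized by the Frobenius-like operator indexed by $\idp$ on the associated finite \'etale $\Lambda$-algebra. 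This step is the main obstacle, because it requires unfolding the Borger--Smit identification rather than merely invoking it; however, the congruences $\sim_\idf$ defining each $DR_\idf$ are designed precisely to mirror the $\Lambda$-structure of $E_K$, so once the correspondence between $\psi_\idp$ and multiplication-by-$\idp$ is established at a single finite level $DR_\idf$ it propagates to $DR_K$ by the inverse limit.

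For bijectivity, I would argue that any $\eta \in \Hom_{G_K}(DR_K, K^{ab})$ is locally constant and therefore factors through some finite quotient $DR_\idf$. Conversely, the $G_K$-equivariant $K^{ab}$-valued functions on such a finite $DR_K$-set correspond, via the semi-galois duality applied to the object of $\C_K$ classified by $DR_\idf$, to the coordinates of that object, which naturally embed into $E_K$. Taking the colimit over all conductors $\idf \in I_K$ and matching Frobenius actions on both sides recovers $\eta$ from a unique element of $E_K$, producing a two-sided inverse of $\Phi$ and completing the proof.
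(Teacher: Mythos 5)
Your proposal takes essentially the same route the paper sketches (and defers to Corollary 3.1.5 of \cite{Uramoto20} for the details): pass to the galois object $X_\Xi = K[\psi_\ida \xi \mid \ida \in I_K, \xi \in \Xi]$ of $\C_K$ generated by $\xi$, read off $G_K$-equivariant $K^{ab}$-valued functions on the fiber $\F_K(X_\Xi)$ and pull back along $DR_K \twoheadrightarrow \F_K(X_\Xi)$, match $\psi_\idp$ with translation by $\idp \in I_K \hookrightarrow DR_K$ via the Borger--Smit identification, and invoke for surjectivity that every galois object is of the form $X_\Xi$. The only substantive caveat is that your construction silently requires $A_\xi$ to be a \emph{galois} object of $\C_K$ (not merely an object), so that $\F_K(A_\xi)$ is a pointed quotient of $DR_K$ and the pullback is well-defined; this is precisely fact (i) from the paper's sketch and should be stated as the nontrivial input it is.
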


Technically, the proof of this theorem is based on the two facts (cf.\ \S 3.2 \cite{Uramoto20}) that (i) each finite set $\Xi \subseteq E_K$ of algebraic Witt vectors generates a \emph{galois object} $X_\Xi$ of the semi-galois category $\C_K$, where $X_\Xi := K[\psi_\ida \xi \mid \xi \in \Xi, \ida \in I_K]$ is the $\Lambda$-ring generated over $K$ by the orbit of $\Xi$ under the action of $\psi_\idp$'s; and (ii) every galois object of $\C_K$ is isomorphic to $X_\Xi$ for some finite $\Xi \subseteq E_K$. (See loc.\ cit.\ for more detail.)

In particular, the isomorphism from $\Hom_{G_K}(DR_K, K^{ab})$ to $E_K$ is defined by restricting locally constant $K^{ab}$-valued $G_K$-equivariant functions $\xi: DR_K \rightarrow K^{ab}$ onto the submonoid $I_K \hookrightarrow DR_K$, inducing functions $\xi : I_K \rightarrow K^{ab}$, or in other words, vectors $\xi \in (K^{ab})^{I_K}$; the above theorem says that the vector $\xi \in (K^{ab})^{I_K}$ induced from $\xi \in \Hom_{G_K}(DR_K, K^{ab})$ always defines an algebraic Witt vector, namely $\xi \in E_K \subset (K^{ab})^{I_K}$ more than $\xi \in (K^{ab})^{I_K}$; and conversely, every algebraic Witt vector $\xi \in E_K$ arises uniquely in this way.
In the sense of this correspondence, we can now identify algebraic Witt vectors $\xi \in E_K$ with locally constant $K^{ab}$-valued $G_K$-equivariant functions on the profinite monoid $DR_K$; in particular, to construct algebraic Witt vectors $\xi \in E_K$ is equivalent to constructing such functions $\xi \in \Hom_{G_K}(DR_K, K^{ab})$. 

%\paragraph{Modular vectors}
\subsubsection{Modular vectors}
When $K$ is an imaginary quadratic field, our previous work \cite{Uramoto20} constructed algebraic Witt vectors $\xi \in E_K$ from special values of the family $\{f_a\}$ of Fricke functions $f_a: \upper \rightarrow \comp$, ($a \in \ratf^2/\integer^2$), which are called \emph{modular vectors}. In the rest of this subsection, we shall review this construction in terms of elliptic curves and their Kummer varieties, which is suitable for extension to abelian varieties of higher dimension (\S \ref{s3}). 

To this end, an adelic description of $DR_K$ essentially by Yalkinoglu \cite{Yalkinoglu13} plays an auxiliary role: 

\begin{prop}[Proposition 8.2, Yalkinoglu \cite{Yalkinoglu13}]
We have a natural isomorphism of profinite monoids:
\begin{eqnarray}
\label{adelic}
 DR_K &\simeq& \widehat{O}_K \times_{\widehat{O}_K^\times} \bigl(\Ad_{K}^\times / \overline{K^\times \cdot K_\infty^{\times, \circ}} \bigr)
\end{eqnarray}
\end{prop}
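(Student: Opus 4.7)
The plan is to identify both sides as inverse limits of the same system of finite monoids indexed by the conductors $\idf \in I_K$. On the left-hand side this is the definition $DR_K = \lim_{\leftarrow \idf \in I_K} DR_\idf$, with each $DR_\idf$ admitting the stratification $DR_\idf = \coprod_{\idd \mid \idf} C_{\idf/\idd}$ already noted in the introduction. On the right-hand side, the first factor $\widehat{O}_K$ is profinite via the reductions $O_K/\idf$, while $\Ad_K^\times / \overline{K^\times \cdot K_\infty^{\times,\circ}}$ is a profinite group isomorphic to $G_K^{ab}$ by class field theory; taking the quotient by the diagonal $\widehat{O}_K^\times$-action along the fiber product produces a profinite monoid whose level-$\idf$ quotient $R_\idf$ is naturally stratified by the ideal locally generated by the reduction of the first component modulo $\idf$.

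First I would define the natural monoid homomorphism $\Phi : I_K \to R_\idf$ by $\Phi(\ida) := [(u_\ida \bmod \idf,\, [u_\ida])]$, where $u_\ida \in \widehat{O}_K \cap \Ad_{K,f}^\times \subset \Ad_K^\times$ is any finite idele satisfying $v_\idp(u_\ida) = v_\idp(\ida)$ at every finite $\idp$ (extended by $1$ at the archimedean places). Any two such choices of $u_\ida$ differ by a factor in $\widehat{O}_K^\times$, which is exactly what is absorbed by the diagonal equivalence in the fiber product, so $\Phi$ is well-defined and clearly multiplicative. Next I would verify that $\Phi$ factors through $\sim_\idf$: if $\ida = (t) \idb$ with $t \in K_+ \cap (1 + \idf \idb^{-1})$, then $u_\ida$ and $t u_\idb$ coincide up to a unit in $\widehat{O}_K^\times$, the class $[t]$ is trivial in $\Ad_K^\times / \overline{K^\times \cdot K_\infty^{\times,\circ}}$ because $t \in K^\times$, and $t \equiv 1 \pmod{\idf}$ after dividing through the common support at primes in $\idf$; together these imply $\Phi(\ida) = \Phi(\idb)$ in $R_\idf$.

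Next I would show that the induced map $\Phi_\idf : DR_\idf \to R_\idf$ is bijective. Surjectivity uses weak/strong approximation: any pair $(a, [u])$ can be represented by a finite idele whose valuations match the support of $a$ at the primes dividing $\idf$, and such a normalized representative is the image of a well-chosen ideal in $I_K$. Injectivity amounts to reconstructing a totally positive $t \in K^\times$ witnessing $\sim_\idf$ from a coincidence on the right-hand side, which is where the \emph{strict} ray class group structure (i.e., the factor $K_\infty^{\times,\circ}$ rather than $K_\infty^\times$) and the fact that one closes $K^\times \cdot K_\infty^{\times,\circ}$ in $\Ad_K^\times$ come into play, the closure being needed to obtain a Hausdorff profinite quotient and to produce the totally positive representative. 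Finally, these isomorphisms are manifestly compatible with the transition morphisms $DR_{\idf'} \twoheadrightarrow DR_\idf$ and $R_{\idf'} \twoheadrightarrow R_\idf$ for $\idf \mid \idf'$, so they pass to the inverse limit.

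The main obstacle is the careful bookkeeping between the diagonal quotient by $\widehat{O}_K^\times$ in the fiber product and the twofold congruence data packaged in $\sim_\idf$ (the multiplicative congruence modulo $\idf \idb^{-1}$ together with total positivity). One has to argue that exactly the right adelic equivalence is cut out by combining the finite-level congruence in the first factor $\widehat{O}_K$ with the quotient by $\overline{K^\times \cdot K_\infty^{\times,\circ}}$ in the second, and that the archimedean closure is neither too large (which would collapse more than intended) nor too small (which would destroy compactness), so that the $\widehat{O}_K^\times$-action glues the two factors into a single profinite monoid matching $DR_K$ stratum by stratum in the decomposition $DR_\idf = \coprod_{\idd \mid \idf} C_{\idf/\idd}$.
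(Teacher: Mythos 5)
The paper does not prove this proposition: it is quoted verbatim from Yalkinoglu (Proposition 8.2 of \cite{Yalkinoglu13}), so there is no ``paper's own proof'' to compare against. Your plan --- reducing both sides to an inverse system of finite monoids indexed by conductors $\idf$, matching them level-by-level, and passing to the limit --- is the natural strategy and is in line with the finite-level statement $DR_\idf \simeq (O_K/\idf) \times_{(O_K/\idf)^\times} C_\idf$ that the paper itself cites from Yalkinoglu's Proposition 7.2 (cf.\ the end of the proof of Proposition~\ref{basic property}). So the route is reasonable.

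However, as a proof the sketch has genuine soft spots. First, the finite-level target $R_\idf$ is never actually defined; the passage from the adelic fiber product $\widehat{O}_K \times_{\widehat{O}_K^\times} (\Ad_K^\times / \overline{K^\times K_\infty^{\times,\circ}})$ to its level-$\idf$ quotient, and the verification that this quotient is $(O_K/\idf)\times_{(O_K/\idf)^\times} C_\idf$, is itself a non-trivial step (one must check the $\widehat{O}_K^\times$-action descends compatibly on both factors and that the inverse limit of these quotients recovers the fiber product). Second, your well-definedness claim ``$t\equiv 1\pmod{\idf}$ after dividing through the common support'' is not literally what is needed or true; the precise fact is that $(t-1)\,u_\idb \in \idf\widehat{O}_K$, which follows from $t-1 \in \idf\idb^{-1}$ together with $v_\idp(u_\idb)=v_\idp(\idb)$ for all $\idp$. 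This is exactly what makes $u_\ida\equiv u_\idb\pmod{\idf}$ up to a unit even when $t\not\equiv 1\pmod{\idf}$, and it is the heart of the computation, so it should be stated correctly. Third, surjectivity and injectivity of $\Phi_\idf$ are asserted rather than argued; surjectivity is really a matter of normalizing representatives (not strong approximation per se), and injectivity --- recovering the totally positive generator $t$ from an adelic coincidence --- is precisely the content of the stratification $DR_\idf=\coprod_{\idd\mid\idf}C_{\idf/\idd}$ and the identification $DR_\idf^\times\simeq C_\idf$, and deserves to be spelled out rather than deferred to ``bookkeeping.'' In short: correct approach, but the central verifications are all left open.
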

Here $K^{\times, \circ}_\infty$ denotes the connected component of the archimedian part $K_\infty^\times$ of the idele group $\Ad^\times_K$ containing the unit $1$; the overline on $K^\times \cdot K^{\times, \circ}_\infty$ denotes the closure in $\Ad_K^\times$. Also the multiplicative group $\widehat{O}_K^\times$ acts on the product monoid $\widehat{O}_K \times (\Ad_K^\times/ \overline{K^\times \cdot K^{\times, \circ}_\infty})$ by $u \cdot (\rho, s) := (\rho u, u^{-1}s)$, for which we take the quotient in (\ref{adelic}). In particular, when $K$ is imaginary quadratic, we have the following simpler form:
\begin{eqnarray}
 DR_K &\simeq& \widehat{O}_K \times_{\widehat{O}_K^\times} \bigl(\Ad_{K,f}^\times / K^\times \bigr)
\end{eqnarray}
Generally, for CM fields $K$, we can represent elements of $DR_K$ as $[\rho, s]$ with $\rho \in \widehat{O}_K$ and $s \in \Ad_{K,f}^\times$. 

The subject here is to review geometrically our previous construction in \cite{Uramoto20} of locally constant $K^{ab}$-valued $G_K$-equivariant functions on $DR_K$ from \emph{Fricke modular functions} $f_a: \upper \rightarrow \comp$ indexed by $a \in \ratf^2/\integer^2$. Technically the Fricke functions $f_a: \upper \rightarrow \comp$ can be described explicitly but in a case-by-case manner according as $K = \ratf(i), \ratf(\sqrt{-3})$ or others (cf.\ \S 6.1 \cite{Shimura}); this is related to the fact that the automorphism groups of elliptic curves with complex multiplication in $K$ differ for the respective cases (cf.\ \S 4.5 \cite{Shimura}). It is for this reason that our study of modular vectors was also done in a case-by-case manner (cf.\ e.g.\ \S 4.3 \cite{Uramoto20}). 

Geometrically, we have a more uniform description of modular vectors via \emph{Kummer varieties}, which is equal to the original one. The \emph{Kummer variety} $V_E$ of an elliptic curve $E$ is the quotient of $E$ by its automorphism group, $V_E = E/\Auto(E)$, which is again an algebraic curve; and the quotient map $h: E \twoheadrightarrow V_E$ is called the \emph{Weber function}; the Kummer varieties $V_E$ are in fact isomorphic to the projective space $\proj^1(\comp)$ in such a way that we have a natural geometric interpretation of the Fricke functions $f_a$: That is, for each non-zero $a = (a_1, a_2) \in \ratf^2/\integer^2$ and $\tau \in \upper$, the special value $f_a(\tau)$ is (the $x$-coordinate of) the image of the torsion point $a_1 \tau + a_2 \in \comp/(\integer \tau + \integer)$ under the map $\comp/(\integer \tau + \integer) \rightarrow \proj^1(\comp)$ given by the following composition:
\begin{equation}
\xymatrix{\comp/(\integer \tau + \integer) \ar[r]^{\hspace{0.6cm}[\wp, \wp', 1]} & E_\tau \ar@{->>}[r]^h & V_E \ar[r]^{\hspace{-1.2cm}\simeq} & \proj^1(\comp) = \comp \cup \{\infty\}};
%a_1 \tau + a_2 \ar@{|->}[rrr] &&& f_a(\tau) }
\end{equation}
also, for $a = 0 \in \ratf^2/\integer^2$, we put $f_0 (\tau) = j(\tau)$. 

With this geometric interpretation of the Fricke functions $f_a$, we now construct locally constant $K^{ab}$-valued $G_K$-equivariant functions $\widehat{f}_a$ on $DR_K$, which provide our target modular vectors:

\begin{defn}[modular vectors $\widehat{f}_a$]
For each $a = (a_1, a_2) \in \ratf^2/\integer^2$, define $\widehat{f}_a: DR_K \rightarrow \comp$ by the following rule: Given each $[\rho, s] \in \widehat{O}_K \times_{\widehat{O}_K^\times} (\Ad_{K,f}^\times/K ^\times) \simeq DR_K$, the value $\widehat{f}_a (\rho, s)$ is the image of the torsion point $a_1 \tau_K + a_2 \in K/O_K$ under the map $K/O_K \rightarrow \proj^1(\comp)$ given by the composition:
\begin{equation*}
\xymatrix{K/O_K \ar[r]^{\rho} \ar@{-}[d]_\simeq & K/O_K \ar[r]^s \ar@{-}[d]_\simeq & K/sO_K \ar@{-}[d]_\simeq \ar@{->}[r]^{\subseteq} & \comp/sO_K \ar[r]^\simeq & E_s \ar[r]^{\hspace{-1.3cm}h} & \proj^1(\comp) = \comp \cup \{ \infty\} \\ 
\Ad_{K,f}/\widehat{O}_K \ar[r]_\rho & \Ad_{K, f}/\widehat{O}_K \ar[r]_s & \Ad_{K, f}/ s \widehat{O}_K & & && }
\end{equation*}
where $O_K = \integer \tau_K + \integer$ with $\tau_K \in \upper$ and $sO_K := s \widehat{O}_K \cap K$ for each $s \in \Ad^\times_{K,f}$; and the isomorphism $\comp/sO_K \simeq E_s$ is given by the Weierstrass $\wp$-function $\wp(u, \Lambda_s)$ for the lattice $\Lambda_s = sO_K \subseteq \comp$; also, if the value $\widehat{f}_a(\rho, s)$ goes to $\infty$ by this map, we renormalize it as $j(\Lambda_s)$. 
\end{defn}

\begin{prop}
These functions $\widehat{f}_a: DR_K \rightarrow \comp$ are equal to those in \cite{Uramoto20}, hence define locally constant $K^{ab}$-valued $G_K$-equivariant functions on $DR_K$, or equivalently, modular vectors $\widehat{f}_a \in E_K$. 
\end{prop}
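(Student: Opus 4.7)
The plan is to verify that the geometrically defined functions $\widehat{f}_a$ agree literally with the modular vectors constructed in \S 4 of \cite{Uramoto20}; once this identification is made, the local constancy, $K^{ab}$-valuedness, and $G_K$-equivariance asserted in the proposition follow immediately from loc.\ cit., so the task reduces to a careful bookkeeping comparison.

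First I would unwind the composite for a given class $[\rho, s] \in \widehat{O}_K \times_{\widehat{O}_K^\times} (\Ad_{K,f}^\times/K^\times)$. The factor $s$ determines the fractional ideal $sO_K = s\widehat{O}_K \cap K$ up to $K^\times$, and hence a homothety class of lattices in $\comp$; choose a representative so that $sO_K = \integer \tau_s + \integer$ for some $\tau_s \in \upper$, yielding an elliptic curve $E_s = \comp/sO_K$ equipped with a marked basis of its torsion. The idelic composite $\rho s : K/O_K \rightarrow K/sO_K$ then translates, in the bases $\{\tau_K,1\}$ and $\{\tau_s,1\}$, into the standard matrix action on $(\ratf/\integer)^2$ that appears in Shimura's reciprocity law (\S 6.8 of \cite{Shimura}); this is exactly the recipe used in \cite{Uramoto20} to record which torsion point of $E_s$ is being evaluated for each index $a \in \ratf^2/\integer^2$.

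Next I would compare the outputs. By the very definition of the Kummer variety $V_E = E/\Auto(E)$ and the Weber function $h$, the value $h(\wp(z;\Lambda_s), \wp'(z;\Lambda_s))$ at a torsion point $z \in \comp/\Lambda_s$ coincides by construction with $f_{a}(\tau_s)$ whenever $z$ corresponds to the index $a$ under the chosen basis; this is precisely how the Fricke functions are defined in \S 6.1 of \cite{Shimura}. For the exceptional index $a=0$ the Weber function sends the origin to a distinguished fibre, whence the explicit renormalization $\widehat{f}_0 = j$ matches the convention $f_0 = j$ used in \cite{Uramoto20}. Combining these two points, $\widehat{f}_a(\rho,s)$ equals the value at $\tau_s$ of the Fricke function indexed by the transformed tuple $a' \in \ratf^2/\integer^2$ produced by the idelic action, which is exactly the definition of the modular vector $\widehat{f}_a$ in \cite{Uramoto20}.

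The main obstacle lies in reconciling the uniform Kummer-variety description with the case-by-case definition of Fricke functions for $K=\ratf(i)$ and $K=\ratf(\sqrt{-3})$, where $\Auto(E_s)$ is strictly larger than $\{\pm 1\}$. The advantage of phrasing things through $V_E = E/\Auto(E)$ is that the quotient is automatically insensitive to these extra automorphisms, so $h(z)$ is canonical; the case-by-case normalizations in \cite{Shimura} amount to particular choices of an isomorphism $V_{E_s} \simeq \proj^1(\comp)$, and once these choices are made consistently the two constructions agree on the nose. With this identification in hand, the properties asserted in the proposition---factoring through some finite $DR_\idf$, values in $K^{ab}$, and equivariance under the $G_K$-action on $DR_K$ via class field theory---are all inherited from the corresponding statements in \cite{Uramoto20}.
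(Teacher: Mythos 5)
Your argument is correct and takes essentially the same route as the paper's: both reduce the identification to the fact that the Weber function $h: E \twoheadrightarrow V_E = E/\Auto(E)$ is insensitive to isomorphisms of elliptic curves (hence to rescaling the lattice $sO_K$ into the normalized form $\integer\tau_s + \integer$ and to the case-by-case normalizations for $K = \ratf(i), \ratf(\sqrt{-3})$), so the new geometric description agrees with the construction in \cite{Uramoto20} and the asserted properties transfer. The paper's proof is considerably more terse, stating only the rescaling point and citing \S 4.5 of \cite{Shimura}, whereas you spell out the matching of the idelic action on torsion with the recipe from Shimura's reciprocity law and the handling of the extra automorphisms; this is extra detail but not a different argument.
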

\begin{proof}
The only slight difference from the original construction is that, in the above construction, we did not re-scale the lattice $sO_K$ into the form $\integer \tau + \integer$ with $\tau \in \upper$; therefore, the lattice $sO_K$ is of the form $sO_K = \integer w_1 + \integer w_2$ with $\tau_s = w_1/w_2 \in \upper$ in general. But this difference in scaling does not affect the resulting values because the complex torus $\comp/sO_K$ is isomorphic to the rescaled one $\comp/ (\integer \tau_s + \integer)$; and the Weber functions on elliptic curves are compatible with their isomorphisms (cf.\ \S 4.5 \cite{Shimura}). This also means that the above construction is independent of the choice of $s \in \Ad^\times_{K,f}$ that represents the class $[s] \in \Ad^\times_{K, f}/K^\times$, hence $\widehat{f}_a: DR_K \rightarrow \comp$ is well-defined.
\end{proof}

%%%%%%
\subsection{Basic facts on CM fields}
\label{s2.2}
Now we prepare some basic results necessary to extend the above construction of modular vectors to the case of CM fields. In this process, as outlined in \S \ref{s1} too, we replace elliptic curves with abelian varieties of higher dimension; accordingly, we need to recall projective embedding of complex tori of higher dimension by theta functions \cite{Mumford}, which replaces projective embedding of one-dimensional complex tori $\comp/(\integer w_1 + \integer w_2)$ by the Weierstrass $\wp$-function $\wp$ (and $\wp'$). Starting from a few basic facts on CM fields, this subsection gathers some results related to projective embedding of complex tori with complex multiplication in CM fields; for more details on these topics, the reader is referred to e.g.\ \cite{Birkenhake_Lange, Milne, Shimura_abelian, Shimura_arithmeticity}. 

%\paragraph{CM fields and Riemann forms}
\subsubsection{CM fields and Riemann forms}
\label{s2.2.1}
When $K$ is an imaginary quadratic field, the complex tori $\comp/sO_K \simeq K_\infty / sO_K$ are polarizable for any $s \in \Ad^\times_{K,f}$, which is not the case for complex tori of higher dimension in general. However, when $K$ is a CM field (say, of degree $[K: \ratf] = 2g$), the complex tori $\comp^g/sO_K \simeq K_\infty/sO_K$ are always polarizable, hence, can be embedded into projective space as abelian varieties. 

To be precise, let $K$ be a CM field of degree $[K: \ratf] = 2g$; also, fix a \emph{CM type} $\Phi = \{\phi_1, \cdots, \phi_g\}$. That is, $\phi_i: K \hookrightarrow \comp$ are embeddings that are neither equal nor conjugate to each other; therefore, denoting by $\bar{\phi}_i: K \hookrightarrow \comp$ the complex conjugate of $\phi_i$, the set $\{\phi_1, \cdots, \phi_g, \bar{\phi}_1, \cdots, \bar{\phi}_g\}$ is precisely that of all embeddings $K \hookrightarrow \comp$. We abusively use the same symbol $\Phi: K \rightarrow \comp^g$ to denote also the map $\Phi: K \ni a \mapsto (\phi_1(a), \cdots, \phi_g(a)) \in \comp^g$; by this map $\Phi$, the $O_K$-modules $sO_K := s\widehat{O}_K \cap K \subseteq K$ for $s \in \Ad^\times_{K, f}$ are embedded into $\comp^g$ as lattices $\Lambda_s := \Phi(sO_K) \subseteq \comp^g$, which then induce the complex tori $\comp^g/\Lambda_s$. 
The first fact necessary in this paper is that the complex tori $\comp^g/\Lambda_s$ are \emph{polarizable} in the following sense, for any $s \in \Ad^\times_{K,f}$ of the CM field $K$:

\begin{defn}[polarization on complex tori]
For a lattice $\Lambda \subseteq \comp^g$, the complex torus $\comp^g / \Lambda$ is said to be \emph{polarizable} if there exists an alternating $\realf$-bilinear form $\psi: \comp^g \times \comp^g \rightarrow \realf$ such that:
\begin{enumerate}
\item $\psi(u, v) \in \integer$ for any $u, v \in \Lambda$;
\item $\psi(J(u), J(v)) = \psi(u, v)$ for any $u, v \in \comp^g$;
\item $\psi(u, J(u)) > 0$ for any non-zero $u \in \comp^g$;
\end{enumerate}
where $J: \comp^g \rightarrow \comp^g$ is the standard complex structure. Such a form $\psi$ is called a (non-degenerate) \emph{Riemann form} on the complex torus $\comp^g/\Lambda$. 
\end{defn}

The fact that the complex tori $\comp^g/\Lambda_s$ are polarizable is based on a few structural properties of the CM field $K$ (e.g.\ Theorem 4, \S 6.2 \cite{Shimura_abelian}): Being a CM field, $K$ is by definition a totally imaginary quadratic extension of its maximal totally real subfield $K_0$; take a totally imaginary element $e \in K$ such that $-e^2$ is a totally positive element of $K_0$ and $\imm(\phi_i (e)) > 0$ for all $\phi_i \in \Phi$, where $\imm(z)$ denotes the imaginary part of $z \in \comp$. With this $e$, define $\psi_e: \comp^g \times \comp^g \rightarrow \realf$ by:
\begin{eqnarray}
 \psi_e(u, v) &:=& \sum_{i=1}^g \phi_i(e) \cdot (u_i \bar{v}_i - \bar{u}_i v_i)
\end{eqnarray}
for $u=(u_1, \cdots, u_g), v=(v_1, \cdots, v_g) \in \comp^g$. In particular, for each $a, b \in K$, we have:
\begin{eqnarray}
 \psi_e(\Phi(a), \Phi(b)) &=& \tr_{K/\ratf} (e \cdot a \cdot b^\iota),
\end{eqnarray}
where $\tr_{K/\ratf}$ denotes the trace over $\ratf$; and $\iota \in Gal(K/K_0)$ denotes the non-trivial element of order 2. In particular, this means that, the $O_K$-module $sO_K$ for any $s \in \Ad^\times_{K,f}$ being a fractional ideal of $K$, we have $n_s\cdot  \psi_e(sO_K, sO_K) \subseteq \integer$ for some integer $n_s$; and then, this $\psi:= n_s\cdot \psi_e$ gives a Riemann form on the complex torus $\comp^g/ \Lambda_s$. 

\begin{rem}[symplectic basis and type]
Given any Riemann form $\psi$ on a complex torus $\comp^g/\Lambda$ in general, the lattice $\Lambda$ has a \emph{symplectic basis}, which is a basis of the lattice $\Lambda$ over $\integer$ by which the Riemann form $\psi: \comp^g \times \comp^g \rightarrow \realf$ can be expressed by the matrix of the form:
\begin{eqnarray}
 \psi(u, v) &=& u \cdot \begin{pmatrix} 0 & \delta \\ -\delta & 0 \end{pmatrix} \cdot v^t
\end{eqnarray}
where $u, v \in \comp^g$ in the left-hand side are re-represented in the right-hand side as real (row) vectors $u, v \in \realf^{2g}$ with respect to the symplectic basis of $\Lambda$; also $\delta \in M_g (\integer)$ is a matrix of the form:
\begin{eqnarray}
 \delta &=& \begin{pmatrix} d_1 & 0 & \cdots & 0 \\ 0 & d_2 & & 0 \\ \vdots && \ddots & \vdots \\ 0 & 0 & \cdots & d_g   \end{pmatrix}
\end{eqnarray}
with $0 < d_i$ and $d_i \mid d_{i+1}$; in the following we denote $\delta = \diag[d_1, \cdots, d_g]$. This $\delta$ is independent of the choice of symplectic basis of $\Lambda$ for the Riemann form $\psi$; thus it makes sense to say that the Riemann form $\psi$ on the complex torus $\comp^g/\Lambda$ is \emph{of type $\delta$}. 
\end{rem}

%\paragraph{Projective embedding}
\subsubsection{Projective embedding}
\label{s2.2.2}
By the above fact that the complex torus $\comp^g / \Lambda_s$ is polarizable for any $s \in \Ad^\times_{K,f}$ of the CM field $K$ with a fixed CM type $\Phi$, it follows that the complex torus $\comp^g/\Lambda_s$ can be isomorphically embedded as an abelian variety in a suitable projective space $\proj^d(\comp)$. Such an embedding can be concretely provided by \emph{theta functions} \cite{Mumford}, which depends on a choice of Riemann form on the complex torus $\comp^g/\Lambda_s$ and its type.

To see such embeddings and theta functions explicitly, take a lattice $\Lambda \subseteq \comp^g$ such that we have a Riemann form $\psi$ on the complex torus $\comp^g/\Lambda$ of type $\delta = \diag[d_1, \cdots, d_g]$; then the lattice $\Lambda$ has a symplectic basis $(\tau_1, \cdots, \tau_g, \delta_1, \cdots, \delta_g)=: (\tau, \delta) \in M_{g \times 2g}(\comp)$ so that $\Lambda =  \integer^g \cdot \tau +  \integer^g \cdot \delta$, where $\integer^g$ consists of row vectors and $\tau_i \in \comp^g, \delta_i \in \integer^g$ are column vectors. Here the $g \times g$ matrix $\tau \in M_g(\comp)$ is in fact a member of the \emph{Siegel upper-half space $\upper_g$} defined by:
\begin{eqnarray}
 \upper_g &:=& \biggl\{ \tau \in M_g(\comp) \mid \tau^t = \tau, \imm(\tau) > 0 \biggr\}
\end{eqnarray}
where $\imm(\tau)>0$ means that the imaginary part $\imm(\tau) \in M_g(\realf)$ of $\tau$ is positive definite. We note conversely that, given $\tau \in \upper_g$, one obtains a lattice $\Lambda_\tau := \integer^g \tau + \integer^g \delta$ such that the complex torus $\comp^g / \Lambda_\tau$ has a Riemann form $\psi_\tau$ of type $\delta$, given by $\psi_\tau (u, v) := \imm(u \cdot (\imm(\tau))^{-1} \cdot \bar{v}^t)$; then $(\tau, \delta)$ gives a symplectic basis of $\Lambda_\tau$ for this Riemann form $\psi_\tau$. Therefore, it follows that the complex tori $\comp^g/\Lambda$ with Riemann forms $\psi$ of type $\delta$ are precisely those of the form $\comp^g/(\integer^g \tau + \integer^g \delta)$ for some $\tau \in \upper_g$ (cf.\ e.g.\ \S 8.1 \cite{Birkenhake_Lange}).

Now we are ready for projective embeddings of the complex tori $\comp^g/(\integer^g \tau + \integer^g \delta)$ with Riemann forms $\psi$ of type $\delta$ by the following \emph{theta functions}:

\begin{defn}[theta function $\theta^k$]
\label{theta}
For each $k \in \delta^{-1} \integer^g/\integer^g$, define $\theta^k: \comp^g \times \upper_g \rightarrow \comp$ as follows:
\begin{eqnarray}
 \theta^k (u; \tau) &:=& \sum_{w \in k + \integer^g} \exp(\frac{w^t \cdot \tau \cdot w}{2} + u \cdot w),
\end{eqnarray}
where $\exp(t) := e^{2 \pi i t}$; also $u \in \comp^g$ are row vectors, while $k \in \delta^{-1}\integer^g/\integer^g$ and $w \in k + \integer^g$ are supposed to be column vectors. 
\end{defn}

The theta functions $\theta^k$ define projective embedding of complex tori of type $\delta = \diag[d_1, \cdots, d_g]$ when $3 \leq d_1$ in the following manner:

\begin{defn}[projective embedding]
\label{projective embedding}
Put $d := |\delta^{-1}\integer^g/\integer^g| -1 = d_1 d_2 \cdots d_g$ - 1; then for each $\tau \in \upper_g$, we can define $\Theta_\tau: \comp^g/ (\integer^g \tau + \integer^g \delta) \rightarrow \proj^d (\comp)$ by:
\begin{eqnarray}
 \comp^g / (\integer^g \tau + \integer^g \delta) &\xrightarrow{\Theta_\tau}& \proj^d(\comp) \\
 u &\longmapsto& [\theta^k(u; \tau)]_{k \in \delta^{-1}\integer^g / \integer^g}
\end{eqnarray}
where $[\theta^k]_{k \in \delta^{-1} \integer^g / \integer^g}$ denotes the homogeneous coordinate on the projective space $\proj^d(\comp)$; by our assumption on $\delta$, this map $\Theta_\tau: \comp^g / (\integer^g \tau + \integer^g \delta) \rightarrow \proj^d(\comp)$ defines a projective embedding, whose image $\Theta_\tau (\comp^g/(\integer^g \tau + \integer^g\delta)) \subseteq \proj^d(\comp)$ is an abelian variety and will be denoted $A_\tau \subseteq \proj^d(\comp)$, thus $\Theta_\tau: \comp^g/(\integer^g \tau + \integer^g \delta) \xrightarrow{\sim} A_\tau$. 
\end{defn}

%%%%%%%%
\subsubsection{Complex multiplication}
\label{s2.2.3}
In the case when $K$ is an imaginary quadratic field, the special value $f(\tau)$ of a modular function on $\upper$ (rational over $\ratf^{ab}$) at $\tau \in K \cap \upper$ is abelian over $K$; Shimura's reciprocity law then shows that the galois conjugate $(f(\tau))^{[s]^{-1}}$ of this special value $f(\tau)$ for $s \in \Ad_{K,f}^\times$ is equal to the special value $f^{q_\tau(s)} (\tau)$ of a suitably defined galois conjugate modular function $f^{q_\tau(s)}$ (cf.\ Theorem 6.18 \cite{Shimura}). In terms of elliptic curves, this reciprocity law is proved using the following \emph{main theorem of complex multiplication}:

\begin{thm}[main theorem of complex multiplication, Theorem 5.4 \cite{Shimura}]
\label{cm for elliptic curve}
Let $\ida$ be a fractional ideal of an imaginary quadratic field $K \subseteq \comp$; and $\Theta: \comp/\ida \rightarrow E_\ida$ an isomorphism of the complex torus $\comp/\ida$ to an elliptic curve $E_\ida$. Then, for any automorphism $\sigma: \comp \rightarrow \comp$ over $K$ with $\sigma|_{K^{ab}} = [s]^{-1}$ for some $s \in \Ad^\times_{K,f}$, there exists a unique isomorphism $\Theta': \comp/s\ida \rightarrow E^\sigma$ such that the following diagram commutates:
\begin{equation}
\xymatrix{ K/\ida \ar[d]_{s} \ar[r] & \comp/\ida  \ar[r]^\Theta & E_\ida \ar[d]^{\sigma} \\
K/s\ida \ar[r] &\comp/s\ida \ar[r]_{\Theta'} & E_\ida^{\sigma}}
\end{equation}
\end{thm}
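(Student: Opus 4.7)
The approach I would take rests on two well-known pillars: (i) every elliptic curve with complex multiplication by $O_K$ is analytically isomorphic to $\comp/\ida'$ for some fractional ideal $\ida' \subseteq K$, uniquely determined up to the ideal class; and (ii) the torsion points of $E_\ida$, identified via $\Theta$ with $K/\ida$, are defined over $K^{ab}$ and so are permuted by $\sigma$. Taken together, these reduce the theorem to the identification of a single ideal class, together with a compatibility check on torsion. The plan is to first show that $E_\ida^\sigma$ is analytically of the form $\comp/s\ida$, then to define $\Theta'$ so as to make the diagram commute on torsion, and finally to argue that $\Theta'$ is forced to be an analytic isomorphism.

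First, since $\sigma$ fixes $K$ elementwise, conjugation by $\sigma$ sends each endomorphism $[\alpha]$ of $E_\ida$ (for $\alpha \in O_K$) to an endomorphism of $E_\ida^\sigma$ still implementing multiplication by $\alpha$. Hence $E_\ida^\sigma$ has CM by $O_K$, and by pillar (i) there is some fractional ideal $\ida'$ and an analytic isomorphism $\comp/\ida' \xrightarrow{\sim} E_\ida^\sigma$. To pin down $\ida'$ in the class of $s\ida$, I would proceed prime by prime via Frobenius elements: for each prime $\idp \nmid \ida$ of good reduction, choose $\sigma$ lifting the Frobenius $\mathrm{Frob}_\idp$, reduce $E_\ida$ and $\Theta$ modulo $\idp$, and identify $\mathrm{Frob}_\idp$ on the reduction with the $N\idp$-power geometric Frobenius. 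A direct calculation on $\idp$-power torsion (via the compatibility between the analytic uniformization and the Tate module) shows that the induced isogeny on $\comp/\ida$ is multiplication by an idele $s_\idp$ representing $\idp$ under the Artin map. Density of Frobenius elements in $G_K^{ab}$ then extends this identification to general $\sigma$, giving $\ida' = s\ida$ after rescaling.

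Second, once this isomorphism is chosen, define $\Theta'$ on the dense subgroup $K/s\ida$ by $\Theta'(sa \bmod s\ida) := \sigma(\Theta(a \bmod \ida))$; this is well-defined because multiplication by $s$ is an isomorphism $K/\ida \xrightarrow{\sim} K/s\ida$. Both $\Theta' \circ s$ and $\sigma \circ \Theta$ are group homomorphisms $K/\ida \to E_\ida^\sigma[\mathrm{tors}]$ which, by the Frobenius computation of the previous paragraph, agree on enough torsion to force equality everywhere; after adjusting $\Theta'$ by a scalar in $O_K^\times$ (an automorphism of $\comp/s\ida$) if necessary, the diagram commutes. Uniqueness of $\Theta'$ is immediate, since any two candidates would agree on the dense torsion subgroup $K/s\ida$ and therefore coincide.

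The hard step is the Frobenius computation underlying the identification $\ida' = s\ida$: translating the \emph{algebraic} Galois action (Frobenius on reduction) into the \emph{analytic} action (multiplication by $s \in \Ad_{K,f}^\times$) via the Artin map. This is the arithmetic heart of the theorem, and requires careful bookkeeping on Tate modules at primes of good reduction together with class field theory to propagate the local identification to arbitrary $\sigma$; everything else in the argument is formal, relying only on density of torsion and the classification of CM elliptic curves by ideal classes.
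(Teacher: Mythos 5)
The paper does not prove this statement: it is quoted as a known reference result, explicitly labeled ``Theorem 5.4 \cite{Shimura},'' and serves only as a preliminary for the CM-field analogue (Theorem \ref{cm for abelian variety}) used in \S \ref{s3}. So there is no proof in the paper to compare against; what you have written is a sketch of the classical proof in Shimura's book, and its broad strokes (classification of CM elliptic curves by ideal classes, reduction at primes of good ordinary reduction plus Deuring's theorem to identify Frobenius with multiplication by a generator of $\idp$, propagation to arbitrary $\sigma$ by density of Frobenius elements in $G_K^{ab}$, extension of the map on torsion by density) are the right ones.

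One logical point is worth tightening. Once you define $\Theta'$ on $K/s\ida$ by $\Theta'(sa \bmod s\ida) := \sigma(\Theta(a \bmod \ida))$, the square commutes \emph{by construction}; the genuine content is that this set-theoretic map on torsion is the restriction of a holomorphic isomorphism $\comp/s\ida \rightarrow E_\ida^\sigma$, and that is precisely what the Frobenius/Tate-module computation delivers, via the statement that $E_\ida^\sigma$ is analytically isomorphic to $\comp/s\ida$ with the \emph{specific} isomorphism compatible with the Artin action on torsion. Your remark about ``adjusting $\Theta'$ by a scalar in $O_K^\times$'' belongs to the alternate formulation in which one first picks an arbitrary analytic isomorphism $\comp/s\ida \xrightarrow{\sim} E_\ida^\sigma$ and then uses the torsion comparison to show it can be corrected by a unit to make the diagram commute; mixing the two formulations, as the current draft does, makes the argument read as circular (defining $\Theta'$ to make the diagram commute, then claiming the diagram commutes after adjusting $\Theta'$). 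Choose one route. Also, the normalization of the CM embedding $O_K \hookrightarrow \End(E_\ida)$ should be fixed once and for all so that ``conjugation by $\sigma$ preserves multiplication by $\alpha$'' is literally true (this uses that $\sigma$ fixes $K$, as you say, but the normalization needs to be named).
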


In the case when $K$ is a CM field of higher degree in general, a similar theorem still holds, where elliptic curves are replaced with abelian varieties of higher dimension; but in order to state it in the case of degree greater than $2$, we need to consider not only $K$ itself but also its \emph{reflex field} $K^*$ for a CM type of $K$:

\begin{defn}[reflex field]
Let $K$ be a CM field of degree $[K: \ratf] = 2g$ and $\Phi = \{\phi_1, \cdots, \phi_g\}$ a CM type of $K$. Then the reflex field $K^*$ of the CM type $(K, \Phi)$ is defined as follows:
\begin{eqnarray}
 K^* &:=& \ratf \biggl(\sum_{i=1}^g \phi_i (e) \mid e \in K \biggr). 
\end{eqnarray}
\end{defn}

\begin{rem}[reflex norm]
\label{reflex norm}
For the CM type $(K, \Phi)$, the reflex field $K^*$ is again a CM field; also, we have the \emph{reflex norm} for $(K, \Phi)$, which is a multiplicative monoid morphism $\eta: \Ad_{K^*} \rightarrow \Ad_K$; this morphism restricts to $\eta: \Ad_{K^*, f}^\times \rightarrow \Ad_{K,f}^\times$ and $\eta: \widehat{O}_{K^*} \rightarrow \widehat{O}_K$; for more details the reader is referred to e.g.\ \S 18.5 \cite{Shimura_abelian} or \S 9.4 \cite{Shimura_arithmeticity}. 
\end{rem}

\begin{thm}[main theorem of complex multiplication, Theorem 18.6 \cite{Shimura_abelian}]
\label{cm for abelian variety}
Let $\ida$ be a fractional ideal of a CM field $K$; and $\Theta: \comp^g/\Phi(\ida) \rightarrow A_\ida$ an isomorphism of the complex torus $\comp^g/\Phi(\ida)$ to an abelian variety $A_\ida$ (\S \ref{s2.2.2}). Then, for any automorphism $\sigma: \comp \rightarrow \comp$ over $K^*$ with $\sigma|_{(K^*)^{ab}} = [s]^{-1}$ for some $s \in \Ad_{K^*, f}^\times$, there exists a unique isomorphism $\Theta': \comp^g/\Phi(\eta(s) \ida) \rightarrow A_\ida^\sigma$ such that the following diagram commutes:
\begin{equation}
\xymatrix{ E/\ida \ar[r]^{\hspace{-0.4cm}\Phi}  \ar[d]_{\eta(s)} & \comp^g/\Phi(\ida) \ar[r]^{\hspace{0.3cm}\Theta} & A_\ida \ar[d]^\sigma \\
E/\eta(s)\ida \ar[r]_{\hspace{-0.4cm}\Phi} & \comp^g/\Phi(\eta(s)\ida) \ar[r]_{\hspace{0.4cm} \Theta'} & A_\ida^\sigma }
\end{equation}
\end{thm}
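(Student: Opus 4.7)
The plan is to construct $\Theta'$ in three steps: analytically uniformize $A_\ida^\sigma$, identify the resulting lattice as $\Phi(\eta(s)\ida)$, and pin down $\Theta'$ by the commutative diagram.

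First, I would transport the CM structure through $\sigma$. The embedding $\iota: O_K \hookrightarrow \End(A_\ida)$ whose analytic representation is the $\Phi$-diagonal action on $\comp^g$ is, by a standard result in CM theory, defined over the reflex field $K^*$; in particular it is stable under any automorphism $\sigma$ of $\comp$ fixing $K^*$. Hence conjugation by $\sigma$ yields $\iota^\sigma: O_K \hookrightarrow \End(A_\ida^\sigma)$ whose analytic representation is again $\Phi$-diagonal. By the uniformization theorem for abelian varieties of CM type $(K, \Phi)$, there exist a fractional ideal $\idb$ of $K$ and an $O_K$-equivariant isomorphism $\Theta': \comp^g/\Phi(\idb) \xrightarrow{\sim} A_\ida^\sigma$, unique up to composing with multiplication by a non-zero element of $K$. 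The problem thus reduces to identifying $\idb$ with $\eta(s)\ida$ and then fixing the $K^\times$-scaling using the required commutativity.

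Second, I would compare the Galois action on torsion of $A_\ida$ with multiplication by $\eta(s)$ on $K/\ida$. The representation of $G_{K^*}$ on the rational Tate module $V_\ell(A_\ida)$ factors through $G_{K^*}^{ab}$, because over $K^*$ this Tate module decomposes into one-dimensional $K_\lambda$-lines indexed by the places $\lambda \mid \ell$ of $K$. The classical Shimura--Taniyama reciprocity identifies the resulting abelian character $G_{K^*}^{ab} \to \Ad_{K,f}^\times$, precomposed with the inverse Artin map, with the reflex norm $\eta$. Applying this to our particular $\sigma$, the action on torsion becomes multiplication by $\eta(s) \in \Ad_{K,f}^\times$ on $K/\ida \subseteq \Ad_{K,f}/\widehat{O}_K$; comparing with the analytic description forces $\idb = \eta(s)\ida$ and simultaneously pins down the $K^\times$-scaling so that the diagram in the statement commutes on all torsion points, hence on the entire abelian variety by density and continuity.

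The main obstacle is the reflex-norm identification itself, that is, the Shimura--Taniyama reciprocity. The cleanest route is to reduce by Chebotarev and continuity to the case in which $\sigma$ is a Frobenius substitution at a prime $\idP$ of $K^*$ of good reduction for $A_\ida$, so that the action on torsion lifts from the geometric Frobenius of the reduction of $A_\ida$ modulo $\idP$. The Shimura--Taniyama congruence then expresses the ideal of $O_K$ generated by this Frobenius endomorphism as $\prod_{\phi \in \Phi} \phi^{-1}(N_{K^*/\ratf}\idP)$, which coincides with $\eta(\idP)$ by the very definition of the reflex norm. Patching such prime-by-prime identifications adelically delivers the global statement, after which the uniqueness of $\Theta'$ and the commutativity of the diagram are formal.
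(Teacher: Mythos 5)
The paper does not prove this statement: it is quoted verbatim as Theorem 18.6 of Shimura's monograph on abelian varieties with complex multiplication, and the citation is the entire content of the paper's ``proof.'' Your sketch is therefore being compared not against an argument in the paper but against the proof in the cited reference.

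With that caveat, your outline is a correct high-level account of the standard proof: descend the CM datum through $\sigma$ using the defining property of the reflex field, invoke analytic uniformization of CM abelian varieties to write $A_\ida^\sigma \simeq \comp^g/\Phi(\idb)$ for some fractional ideal $\idb$, compare the $G_{K^*}$-action on torsion (Tate modules) with multiplication by ideles, identify the resulting character with the reflex norm via Shimura--Taniyama, reduce to Frobenius elements by Chebotarev and continuity, and finally fix the $K^\times$-scaling ambiguity of $\Theta'$ by the required commutativity on torsion. This is essentially the strategy of Shimura's proof, cast in a somewhat more modern (Tate-module) language. One imprecision worth flagging: you say the embedding $\iota: O_K \hookrightarrow \End(A_\ida)$ is ``defined over the reflex field $K^*$,'' but the endomorphisms of a CM abelian variety over $\comp$ are not in general defined over $K^*$ itself (only over some abelian extension of it). What is true, and what your argument actually needs, is that the \emph{CM type} is $K^*$-rational: if $A$ has type $(K,\Phi)$ then $A^\sigma$ has type $(K, \sigma \circ \Phi)$, and $\sigma \circ \Phi = \Phi$ precisely when $\sigma$ fixes $K^*$, which is the definition of the reflex field. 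With that correction the rest of the sketch goes through as the standard argument.
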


%%%%%%%%%%%%%%%%%%%%%%%%%%%%%%%%%%
\section{Modular vectors over CM fields}
\label{s3}
The subject of this section is to construct \emph{modular vectors} in the case when $K$ is the reflex field of a CM type; to be compatible with notations in \S \ref{s1}, let $(E, \Phi)$ be a CM type of degree $[E: \ratf] = 2g$, and put $K = E^*$; by the identification discussed in \S \ref{s2.2.1}, it suffices to construct locally constant $K^{ab}$-valued $G_K$-equivariant functions on $DR_K$. Thus, we divide this section into two sub-sections for (i) a construction of suitable functions on $DR_K$ from modular functions on the Siegel upper-half space $\upper_g$ (\S \ref{s3.1}), and (ii) a proof that they are indeed locally constant, $K^{ab}$-valued, and $G_K$-equivariant (\S \ref{s3.2}). 

%%%%%%
\subsection{Construction of modular vectors}
\label{s3.1}
Throughout this section, we shall fix a CM field $E$ of degree $[E: \ratf] = 2g$ together with a CM type $\Phi = \{\phi_1, \cdots, \phi_g\}$ of $E$; let $K = E^*$ be the reflex field of $(E, \Phi)$. Also we fix a Riemann form on the complex torus $\comp^g/\Phi(O_E)$, $\psi: \comp^g \times \comp^g \rightarrow \realf$, so that the ring $O_E$ of integers has a symplectic basis $(\tau_1, \cdots, \tau_g, \delta_1, \cdots, \delta_g)$ for $\psi$ with which $\psi$ is of type $\delta = \diag[d_1, \cdots, d_g]$ and $3 \leq d_i \mid d_{i+1}$. Such a Riemann form $\psi: \comp^g \times \comp^g \rightarrow \realf$ always exists; the condition on its type $\delta$ is necessary to guarantee that the theta functions $\theta^k$ induce  projective embeddings of complex tori $\comp^g/ (\integer^g \tau + \integer^g \delta)$ of type $\delta$ for each $\tau \in \upper_g$ (cf.\ \S \ref{s2.2.2}); for a technical reason (cf.\ Proposition 8.10.2 \cite{Birkenhake_Lange}), we further suppose that $4 \leq d_1$ and $2 \mid d_1$ or $3 \mid d_1$. 

In particular, by construction, the ring $O_E$ of integers in $E$ is mapped to a lattice $\Phi(O_E) \subseteq \comp^g$ by $\Phi: E \rightarrow \comp^g$, which can be written by a matrix $\tau_E \in \upper_g$ as in the following form:
\begin{eqnarray}
 \Phi(O_E) &=& \integer^g \tau_E + \integer^g \delta
\end{eqnarray}
Therefore, as discussed in \S \ref{s2.2.3}, the complex torus $\comp^g/\Phi(O_E) = \comp^g/(\integer^g \tau_E + \integer^g \delta)$ is isomorphic to an abelian variety, which we shall denote $A_E \subseteq \proj^d(\comp)$, through the following projective embedding $\Theta_E:=\Theta_{\tau_E}: \comp^g/\Phi(O_E) \hookrightarrow \proj^d(\comp)$ given by (cf.\ Definition \ref{projective embedding}):
\begin{eqnarray} 
\comp^g/\Phi(O_E) &\xrightarrow{\Theta_{E}}& \proj^d(\comp)\\
u &\longmapsto& [\theta^k (u; \tau_E)]_{k \in \delta^{-1}\integer^g/\integer^g},
\end{eqnarray}
where $d = d_1 d_2 \cdots d_g - 1$. More generally, for any $s \in \Ad_{K, f}^\times$, the $O_E$-module $\eta(s)O_E := \eta(s)\widehat{O}_E \cap E$ is mapped to the lattice $\Phi(\eta(s)O_E) \subseteq \comp^g$, where $\eta: \Ad_{K, f}^\times = \Ad_{E^*, f}^\times \rightarrow \Ad_{E, f}^\times$ denotes the reflex norm (cf.\ \S \ref{s2.2.3}); in the following we denote this lattice $\Phi(\eta(s)O_E)$ by $\Lambda_s$ for each $s \in \Ad_{K,f}^\times$, in particular, $\Lambda_E := \Lambda_1 = \Phi(O_E)$. 

The main theorem of complex multiplication (Theorem \ref{cm for abelian variety}) shows that, for each automorphism $\sigma: \comp \rightarrow \comp$ over $K=E^*$ with $\sigma|_{K^{ab}} = [s]^{-1}$, we have a unique isomorphism $\Theta': \comp/\Lambda_s \rightarrow A_E^\sigma$ such that:
\begin{equation}
\xymatrix{ E/O_E \ar[r]^{\Phi}  \ar[d]_{\eta(s)} & \comp^g/\Lambda_E \ar[r]^{\hspace{0.3cm}\Theta_E} & A_E \ar[d]^\sigma \\
E/\eta(s)O_E \ar[r]_{\Phi} & \comp^g/\Lambda_s \ar[r]_{\hspace{0.4cm} \Theta'} & A_E^\sigma }
\end{equation} 
We will use these $\Theta'$ for our construction of modular vectors; but note that, while the isomorphism $\Theta_E: \comp^g/\Lambda_E \rightarrow A_E$ is given explicitly by the theta functions $\theta^k(*; \tau_E)$ as above, the isomorphism $\Theta': \comp^g/\Lambda_s \rightarrow A_E^\sigma$ is not yet quite explicit for $s \in \Ad^\times_{K, f}$. Therefore, our task here is first to construct modular vectors over $K$ using these abstract $\Theta'$; and then to describe the resulting modular vectors more explicitly. 

Concerning this first task, notice that, while $\Theta': \comp^g/\Lambda_s \rightarrow A_E^\sigma$ itself depends on $\sigma$ rather than $\sigma|_{K^{ab}}= [s]^{-1}$ (hence, is not actually determined only by $s$), the composition $\Theta' \circ \Phi: E/\eta(s)O_E \hookrightarrow \comp^g/\Lambda_s \rightarrow A_E^\sigma \subseteq \proj^d(\comp)$ does not because the image of $E/\eta(s)O_E$ are the torsion points of $A_E^\sigma$ and the action of $\sigma$ on the torsion points is determined by $[s]$. Therefore, it makes sense to denote this composition as $\Theta_s: E/\eta(s)O_E \rightarrow A_E^\sigma \subseteq \proj^d(\comp)$ for each $s \in \Ad_{K,f}^\times$. With this in mind, we abusively denote as $A_E^\sigma = A_s$, and also $\Theta_s: \comp^g/\Lambda_s \rightarrow A_s$. 

Now we construct modular vectors using these (still abstract) isomorphisms $\Theta_s: \comp^g/\Lambda_s \rightarrow A_s$. To this end, we note that each abelian variety $A_s$ is a priori a subvariety of the ambient projective space $\proj^d(\comp)$; thus the rational functions on $\proj^d(\comp)$ uniformly restrict to each subvariety $A_s$. More explicitly, in terms of the homogeneous coordinates $[\theta^k]_{k \in \delta^{-1}\integer^g/\integer^g}$ of $\proj^d(\comp)$, the rational functions on $\proj^d(\comp)$ are of the form $f(\theta^k) = g(\theta^k)/h(\theta^k)$, where $g, h$ are homogeneous polynomials of $\theta^k$'s of the same degree; in particular, we mean by \emph{rational functions (over $K$)} those $f(\theta^k) = g(\theta^k)/h(\theta^k)$ whose coefficients are in $K$. Also, for each $a = (a_1, a_2) \in \ratf^{2g}/\integer^{2g}$, we say that a rational function $f(\theta^k)$ is \emph{defined at $a$} if it is defined on the torsion point of $A_E$ corresponding to $a_1 \tau_E + a_2 \delta \in E/O_E$ under the map $\Theta_E \circ \Phi : E/O_E \hookrightarrow \comp^g/\Lambda_E \simeq A_E$; also we say that $f(\theta^k)$ is \emph{defined for $a$} if $f(\theta^k)$ is defined at $a \eta(\rho)$ for any $\rho \in \widehat{O}_K$, where and in the following we identify $a = (a_1, a_2) \in \ratf^{2g}/\integer^{2g}$ with $a_1 \tau_E + a_2 \delta \in E/O_E$. 

\begin{defn}[modular vector]
\label{defn of modular vector}
Let $a \in N^{-1}\integer^{2g}/\integer^{2g}$ and $f$ a rational function defined for $a$. Then the \emph{modular vector} $\widehat{f}_a: DR_K \rightarrow \comp$ is defined by the following rule: Given each $[\rho, s] \in DR_K$, the value $\widehat{f}_a(\rho, s)$ is the image of the torsion point $a_1 \tau_E + a_2 \delta \in E/O_E$ (at the upper-left corner in (\ref{diagram for modular vector})) under the map $E/O_E \rightarrow \comp$ given by the following composition:
\begin{equation}
\label{diagram for modular vector}
\xymatrix{E/O_E \ar[r]^{\eta(\rho)} \ar@{-}[d]_\simeq & E/O_E \ar[r]^{\eta(s)} \ar@{-}[d]_\simeq & E/\eta(s)O_E \ar@{-}[d]_\simeq \ar@{->}[r]^{\Phi} & \comp^g/\Lambda_s \ar[r]^{\Theta_s} & A_s \ar[r]^{f|_{A_s}} & \comp \\ 
\Ad_{E,f}/\widehat{O}_E \ar[r]_{\eta(\rho)} & \Ad_{E, f}/\widehat{O}_E \ar[r]_{\eta(s)} & \Ad_{E, f}/ \eta(s) \widehat{O}_E & & && }
\end{equation}
\end{defn}

\begin{rem}[rational functions over $\ratf$]
In the definition of modular vectors, we used rational functions $f$ defined over $K$; but it suffices for our purpose in \S \ref{s4} to consider only rational functions defined over $\ratf$. In the latter case, nevertheless, the set of modular vectors itself does not form a $K$-algebra; in that case, we need to consider the $K$-algebra generated (over $K$) by modular vectors.
\end{rem}

\begin{lem}
\label{well-definedness}
The above modular vectors $\widehat{f}_a$ are well-defined as $\comp$-valued functions on $DR_K$.
\end{lem}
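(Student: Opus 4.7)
The task is to verify that (\ref{diagram for modular vector}) produces a well-defined complex number, i.e., that $\widehat{f}_a(\rho, s)$ is independent of (i) the representative $(\rho, s) \in \widehat{O}_K \times \Ad_{K,f}^\times$ chosen for the class $[\rho, s] \in DR_K$, and (ii) the implicit automorphism $\sigma \in \Auto(\comp/K)$ lifting $[s]^{-1} \in G_K^{ab}$ which enters via $A_s = A_E^\sigma$ and the CM-theorem isomorphism $\Theta_s = \Theta'$; and further (iii) that $f$ is actually defined at the point of $\proj^d(\comp)$ appearing at the end of the composition. The plan is to use the main theorem of complex multiplication (Theorem \ref{cm for abelian variety}) to recast the value in an absolute form involving only the fixed theta-function embedding $\Theta_E$ and the Galois class $[s]^{-1}$, from which all three checks follow in sequence.

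Setting $x_a := a_1 \tau_E + a_2 \delta \in E/O_E$ and applying Theorem \ref{cm for abelian variety} with $\ida = O_E$ to $z := \eta(\rho)x_a$ yields the identity $\Theta_s(\Phi(\eta(s)\eta(\rho)x_a)) = \sigma(\Theta_E(\Phi(\eta(\rho)x_a)))$ in $\proj^d(\comp)$. Since $\Theta_E(\Phi(\eta(\rho)x_a))$ is a torsion point of the CM abelian variety $A_E$ (hence has coordinates in $K^{ab}$) and $f$ is rational over $K$ (or over $\ratf$; cf.\ the remark after Definition \ref{defn of modular vector}), $f$ commutes with $\sigma$, and $\sigma|_{K^{ab}} = [s]^{-1}$ yields the absolute form
\begin{equation*}
 \widehat{f}_a(\rho, s) \;=\; f\bigl(\Theta_E(\Phi(\eta(\rho) x_a))\bigr)^{[s]^{-1}}.
\end{equation*}
Independence of $\sigma$ is then immediate (only $\sigma|_{K^{ab}}$ enters), and independence of $s$ modulo $\overline{K^\times \cdot K_\infty^{\times,\circ}}$ (the kernel of the global Artin map) follows since $[s]^{-1}$ is well-defined on that quotient. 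For the $\widehat{O}_K^\times$-equivalence $(\rho, s) \sim (\rho u, u^{-1}s)$ with $u \in \widehat{O}_K^\times$, we have $\eta(u) \in \widehat{O}_E^\times$, so $\eta(u^{-1}s)O_E = \eta(s)O_E$ and $[u^{-1}s]^{-1} = [s]^{-1}[u]$ in the abelian $G_K^{ab}$; invariance then reduces, via the absolute form, to the identity
\begin{equation*}
 f\bigl(\Theta_E(\Phi(\eta(u) y))\bigr) \;=\; f\bigl(\Theta_E(\Phi(y))\bigr)^{[u]^{-1}} \qquad (y \in E/O_E),
\end{equation*}
which is a direct instance of Theorem \ref{cm for abelian variety} applied to $A_E$ with $\ida = O_E$ and $s$ replaced by $u$, read at the level of torsion-point coordinates. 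Defined-ness of $f$ at the final point is inherited from the hypothesis that $f$ is defined for $a$, since Galois acts by automorphisms of $\proj^d(\comp)$ and hence cannot introduce a pole.

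The main obstacle is this last identity: a priori, Theorem \ref{cm for abelian variety} produces a distinct isomorphism $\Theta_u: \comp^g/\Lambda_E \to A_E^{\sigma_u}$ satisfying $\Theta_u(\Phi(\eta(u)y)) = \sigma_u(\Theta_E(\Phi(y)))$, and one must argue that $\Theta_u$ and $\Theta_E$ induce the same value of $f$ on the relevant torsion points — i.e.\ that the ambiguity between the fixed theta-embedding and the CM-theoretic isomorphism is absorbed at the level of the $K$-rational function $f$ on the image projective variety. This is the higher-dimensional counterpart of the role played by the Weber function (Kummer quotient) in the imaginary-quadratic case of \cite{Uramoto20}, and is secured by the normalizations on the Riemann form of type $\delta$ and the theta-embedding conventions of \S \ref{s2.2}; the resulting identity is precisely the CM-abelian-variety form of Shimura's reciprocity law for torsion coordinates of abelian varieties of type $(E, \Phi)$.
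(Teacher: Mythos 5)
Your proposal follows essentially the same strategy as the paper's proof: use Theorem \ref{cm for abelian variety} to derive the absolute form $\widehat{f}_a(\rho,s) = f(\Theta_E(\Phi(\eta(\rho)x_a)))^{[s]^{-1}}$ (which is exactly what the paper derives and uses at (\ref{galois conjugate expression of modular vector})), then check finiteness and the three invariances. Where you differ is in your final paragraph, and this is actually the most valuable part of your write-up: you correctly identify that the $\widehat{O}_K^\times$-invariance $\widehat{f}_a(\rho u, u^{-1}s) = \widehat{f}_a(\rho,s)$ does \emph{not} simply fall out of multiplicativity of $\eta$, but reduces to the identity $f(\Theta_E(\Phi(\eta(u)y))) = f(\Theta_E(\Phi(y)))^{[u]^{-1}}$ for $u \in \widehat{O}_K^\times$, i.e.\ to the assertion that the CM-theoretic isomorphism $\Theta_u'\colon \comp^g/\Lambda_E \to A_E^{\sigma_u}$ produced by Theorem \ref{cm for abelian variety} agrees on torsion with the fixed theta embedding $\Theta_E$. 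This is genuinely nontrivial because $\Theta_s$, as the paper itself emphasizes just before Definition \ref{defn of modular vector}, is determined by $[s]$ rather than by the lattice $\Lambda_s$ alone, and $[u^{-1}s]\neq [s]$ in $G_K^{ab}$ for general $u \in \widehat{O}_K^\times$. The paper's own proof of this lemma dispatches the $\widehat{O}_K^\times$-step with the single observation that $\eta(u^{-1}s)\circ\eta(\rho u)=\eta(s)\circ\eta(\rho)$, which fixes the element of $E/\eta(s)O_E$ but silently assumes that $\Theta_{u^{-1}s}$ and $\Theta_s$ give the same value on it — precisely the subtlety you flag.

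So: your proof is not wrong, and in one respect it is more careful than the paper's — but it is also not complete, because you leave the key identity unproved, deferring to "normalizations" and "Shimura's reciprocity law" without specifying which statement closes the gap. In the paper this is implicitly absorbed by the later computation in Proposition \ref{basic property} and the explicit formula in terms of $\theta^{k/l}_{au_s}(\alpha_s(\tau_E))$, whose proof does invoke Shimura's reciprocity law (Theorem 9.6 \cite{Shimura_arithmeticity}) and the decomposition $\tilde{\eta}(s)=\tilde u_s\tilde\alpha_s$; the $\widehat{O}_K^\times$-ambiguity in $s$ is absorbed into $u_s$, which does not change $\tau_s=\alpha_s(\tau_E)$. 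If you want your argument to stand on its own, you should make this explicit: apply Lemma \ref{decomposition} to $\eta(u)$ (giving $\tilde u_u \in GSp_{2g}(\widehat{\integer})$ and $\tilde\alpha_u=1$ since $\eta(u)\in\widehat{O}_E^\times$), and then Shimura's reciprocity law gives $(\theta^{k/l}_{a'})^{[u]^{-1}}(\tau_E) = \theta^{k/l}_{a'\eta(u)}(\tau_E)$, which is exactly your required identity applied to the generators $\theta^{k/l}_{a'}$. Without this, your last paragraph names the obstacle correctly but does not clear it.
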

\begin{proof}
We need to check two points for the well-definedness of $\widehat{f}_a$: The first is that the value $\widehat{f}_a(\rho, s)$ is indeed finite; the second is that it depends only on the class $[\rho, s] \in DR_K$ rather than $(\rho, s)$. To see these, consider the following commutative diagram:
\begin{equation}
\xymatrix{E/O_E \ar[r]^{\eta(\rho)} & E/O_E \ar[d]_{\eta(s)} \ar[r]^\Phi & \comp^g/\Lambda_E \ar[r]^{\Theta_E} & A_E \ar[d]^{[s]^{-1}} \\
& E/\eta(s)O_E \ar[r]_\Phi & \comp^g/\Lambda_s \ar[r]_{\Theta_s} & A_s
}
\end{equation}
Since the right-most vertical isomorphism $[s]^{-1}: A_E \rightarrow A_s$ (on the torsion points) depends only on the class $[s]$, it first follows that the value $\widehat{f}_a(\rho, s)$ depends on $[s]$ rather than $s$ itself; also, the torsion point $a_1 \tau_E + a_2 \delta \in N^{-1}O_E/O_E$ is mapped again to $N^{-1}O_E/ O_E$ by $\eta(\rho): E/O_E \rightarrow E/O_E$; therefore, the assumption that $f$ is now defined for $a$ implies that the value $\widehat{f}_a(\rho, s)$ is finite. 
Finally the equality $\widehat{f}_a(\rho u, u^{-1} s) = \widehat{f}_a (\rho, s)$ for any $u \in \widehat{O}_K^\times$ follows from the fact that the composition $\eta(u^{-1}s) \circ \eta(\rho u): E/O_E \rightarrow E/\eta(u^{-1}s)O_E = E/\eta(s)O_E$ is equal to $\eta(s) \circ \eta(\rho): E/O_E \rightarrow E/\eta(s)O_E$ since the reflex norm $\eta$ is multiplicative. This completes the proof.
\end{proof}

%%%%%%
\subsection{Basic properties}
\label{s3.2}
We prove here that the modular vectors $\widehat{f}_a: DR_K \rightarrow \comp$ are indeed locally constant $K^{ab}$-valued and $G_K$-equivariant, hence define algebraic Witt vectors $\widehat{f}_a \in E_K$ over $K$. Moreover, as mentioned above too, the construction of modular vectors depends on the abstract isomorphisms $\Theta_s: \comp^g/\Lambda_s \rightarrow A_s$; thus, we shall also give a more explicit description of modular vectors in terms of the theta functions $\theta^k: \comp^g \times \upper_g \rightarrow \comp$ of \S \ref{s2.2.2} at a concrete $\tau_s \in \upper_g$. 

We start with proving the fact that the modular vectors define algebraic Witt vectors:

\begin{prop}
\label{basic property}
Take a rational function $f$ and $a \in N^{-1}\integer^{2g}/\integer^{2g}$ as in Definition \ref{defn of modular vector}; then the modular vector $\widehat{f}_a: DR_K \rightarrow \comp$ is locally constant $K^{ab}$-valued and $G_K$-equivariant.
\end{prop}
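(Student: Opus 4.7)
The plan is to apply the main theorem of complex multiplication (Theorem \ref{cm for abelian variety}) to pull the evaluation of $\widehat{f}_a(\rho,s)$ back from the $\sigma$-twisted abelian variety $A_s$ to the fixed variety $A_E$, thereby decoupling the dependence on $\rho$ from that on $s$. Concretely, for any $s \in \Ad_{K,f}^\times$ I choose a lift $\sigma \in \Auto(\comp/K)$ with $\sigma|_{K^{ab}} = [s]^{-1}$; Theorem \ref{cm for abelian variety} yields the outer commutativity $\Theta_s \circ \Phi \circ \eta(s) = \sigma \circ \Theta_E \circ \Phi$. Because $f$ has $K$-rational (or, in the setup of the remark after Definition \ref{defn of modular vector}, $\ratf$-rational) coefficients and $\sigma$ fixes $K$, evaluation of $f$ commutes with $\sigma$, so Definition \ref{defn of modular vector} can be rewritten as the key identity
\begin{equation*}
\widehat{f}_a(\rho, s) \;=\; \sigma\bigl(v_a(\rho)\bigr), \qquad v_a(\rho) := f\bigl(\Theta_E(\Phi(\eta(\rho)(a_1\tau_E + a_2\delta)))\bigr),
\end{equation*}
in which the factor $v_a(\rho)$ is independent of $s$.

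Next, I would establish two basic properties of $v_a(\rho)$. Since $a \in N^{-1}\integer^{2g}/\integer^{2g}$ and $f$ is defined for $a$ by hypothesis, the element $\eta(\rho)(a_1\tau_E + a_2\delta)$ lies in $N^{-1}O_E/O_E$, and its image under $\Theta_E \circ \Phi$ is an $N$-torsion point of $A_E$. By the classical theory of complex multiplication for abelian varieties of CM type $(E, \Phi)$ (cf.\ \S 19 \cite{Shimura_abelian}), the coordinates of $N$-torsion points of $A_E$ are defined over a finite abelian extension $L_N \subseteq K^{ab}$ of $K = E^*$; since $f$ is $K$-rational, $v_a(\rho) \in L_N$. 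The assignment $\rho \mapsto \eta(\rho)(a_1\tau_E + a_2\delta)$ factors through the continuous monoid morphism $\widehat{O}_K \to \widehat{O}_E$ followed by the action on the finite module $N^{-1}O_E/O_E$, hence takes only finitely many values and is locally constant, so $v_a: \widehat{O}_K \to L_N$ is locally constant as well.

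From the identity $\widehat{f}_a(\rho, s) = \sigma(v_a(\rho))$, all three desired properties now follow. $K^{ab}$-valuedness is immediate, since $v_a(\rho) \in L_N \subseteq K^{ab}$ and $\sigma$ preserves $K^{ab}$. For local constancy on $DR_K$, $\widehat{f}_a$ depends on $\rho$ only through its (locally constant) class mapping to the finite set of $N$-torsion coordinates, and on $s$ only through $\sigma|_{L_N}$, which factors through the finite quotient $\mathrm{Gal}(L_N/K)$ of $G_K^{ab}$; these are open conditions on $\widehat{O}_K$ and on $\Ad_{K,f}^\times/K^\times$ respectively, together yielding continuity on $DR_K$. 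For $G_K$-equivariance, given $\tau \in G_K$ with $\tau|_{K^{ab}} = [t]^{-1}$, one has $\tau(\widehat{f}_a(\rho, s)) = (\tau \sigma)(v_a(\rho))$ with $(\tau \sigma)|_{K^{ab}} = [ts]^{-1}$, which by the same identity equals the value of $\widehat{f}_a$ at the translate of $[\rho, s] \in DR_K$ by the image of $\tau$ under the isomorphism $G_K^{ab} \simeq DR_K^\times$. I expect the main obstacle to be the careful bookkeeping of sign and side conventions relating the right $G_K$-action on $DR_K$ (via the Artin map) to the Galois action on $L_N$-valued torsion-point coordinates (via the reflex norm in Theorem \ref{cm for abelian variety}); once this compatibility is verified, all three assertions of the proposition drop out of the key identity above.
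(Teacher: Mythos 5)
Your proof is correct and takes essentially the same approach as the paper: both rest on the commutative diagram from Theorem \ref{cm for abelian variety} to rewrite $\widehat{f}_a(\rho,s)$ as $\sigma(v_a(\rho))$ with $\sigma|_{K^{ab}} = [s]^{-1}$, and then on the classical fact that $v_a(\rho)$ lies in a finite abelian extension of the reflex field $K=E^*$, from which $K^{ab}$-valuedness, $G_K$-equivariance, and local constancy all follow. The paper is merely more explicit at the middle step, expressing $v_a(\rho)$ as an algebraic combination of ratios $\theta^{k/l}_{a\eta(\rho)}(\tau_E)$, citing Theorem 27.13 of \cite{Shimura_abelian} for the modular-function property of $\theta^{k/l}_a$, and then using the adelic description $DR_{NO_K} \simeq (O_K/NO_K)\times_{(O_K/NO_K)^\times}C_{NO_K}$ to identify the finite quotient precisely; your appeal to ``classical CM theory'' for the abelianness of torsion-point coordinates and the resulting factorization through a finite quotient of $DR_K$ is the same content stated less specifically.
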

\begin{proof}
We first see that $\widehat{f}_a: DR_K \rightarrow \comp$ is $K^{ab}$-valued. To this end, note the following commutative diagram:
\begin{equation}
\xymatrix{E/O_E \ar[r]^{\eta(\rho)} & E/O_E \ar[d]_{\eta(s)} \ar[r]^\Phi & \comp^g/\Lambda_E \ar[r]^{\Theta_E} & A_E \ar[d]^\sigma \ar[r]^{f|_{A_E}} & \comp \ar[d]^\sigma \\
& E/\eta(s)O_E \ar[r]_\Phi & \comp^g/\Lambda_s \ar[r]_{\Theta_s} & A_s \ar[r]_{f|_{A_s}} & \comp
}
\end{equation}
where $\sigma|_{K^{ab}} = [s]^{-1}$. By definition of $\widehat{f}_a$, this diagram shows that we have:
\begin{eqnarray}
\widehat{f}_a(\rho, s) &=& f(\theta^k(a_1' \tau_E + a_2' \delta; \tau_E))^\sigma
\end{eqnarray}
where $a' = (a_1', a_2') \in N^{-1}\integer^{2g}/\integer^{2g}$ is defined as $a' = a \cdot \eta(\rho)$. Since $f(\theta^k)=g(\theta^k)/h(\theta^k)$ is now a rational function over $K$ with homogeneous polynomials $g, h$ of the same degree, it follows that this value $\widehat{f}_a(\rho, s)$ can be expressed as an algebraic combination of ratios of theta values in the following form for some $k, l \in \delta^{-1}\integer^g/\integer^g$:
\begin{eqnarray}
\biggl( \frac{\theta^k(a_1' \tau_E + a_2' \delta; \tau_E)}{\theta^l (a_1' \tau_E + a_2' \delta; \tau_E)} \biggr)^\sigma
\end{eqnarray}
In fact, these ratios are abelian over $K$ since the following meromorphic function $\theta^{k/l}_a$ on $ \upper_g$ defines a modular function on $\upper_g$ for all $a \in \ratf^{2g}/\integer^{2g}$ and $k, l \in \delta^{-1}\integer^g/\integer^g$ (cf.\ Theorem 27.13, \cite{Shimura_abelian}):
\begin{eqnarray}
\label{theta ratio}
 \theta^{k/l}_a (\tau) &:=& \frac{\theta^k(a_1 \tau + a_2 \delta; \tau)}{\theta^l (a_1 \tau + a_2 \delta; \tau)} 
\end{eqnarray}
Therefore, the action of $\sigma$ on these values at $\tau_E$ is in fact equal to $[s]^{-1}$; and we eventually have:
\begin{eqnarray}
\label{galois conjugate expression of modular vector}
\widehat{f}_a(\rho, s) &=& f(\theta^k(a_1' \tau_E + a_2' \delta; \tau_E))^{[s]^{-1}} \in K^{ab}. 
\end{eqnarray}
From this expression, we also easily deduce that $\widehat{f}_a: DR_K \rightarrow K^{ab}$ is $G_K$-equivariant. (Recall that the action of $G_K$ on $DR_K$ factors through $G_K \twoheadrightarrow G_K^{ab}$, and the action of $[t] \in G_K^{ab}$ on $[\rho, s]$ is equal to $[\rho, t^{-1}s]$; see e.g.\ Lemma 4.2.3 \cite{Uramoto20}.) 

Finally, we see that $\widehat{f}_a:DR_K \rightarrow K^{ab}$ is locally constant; in fact, to be more specific, $\widehat{f}_a$ factors through the finite quotient $DR_K \twoheadrightarrow DR_{NO_K}$ for the conductor $NO_K$ for $a \in N^{-1} \integer^{2g}/\integer^{2g}$. To see this, note first that the action of $\eta(\rho) \in \widehat{O}_E$ (by multiplication) on $N^{-1}O_E/O_E$ is determined by the class $[\rho] \in \widehat{O}_K/N \widehat{O}_K$ (hence also by the classes in $\widehat{O}_K/N' \widehat{O}_K$ for any $N \mid N'$); this follows from the fact that the reflex norm $\eta: \widehat{O}_K = \widehat{O}_{E^*} \rightarrow \widehat{O}_E$ maps $N\widehat{O}_K$ into $N \widehat{O}_E$ (cf.\ \S 18.5, \cite{Shimura_abelian}). Moreover, the values $\widehat{f}_a(\rho, s)$ belong at largest to the ray class field $K_{N'O_K}$ of the conductor $N'O_K$ for some $N'$ such that $N \mid N'$ since these values are given by special values of $f(\theta^k)$ at $N$-torsion points of $A_E$, hence abelian over $K$ in particular; thus the action of $[s]^{-1}$ on these values factors through the quotient $G_K^{ab} \twoheadrightarrow C_{N'O_K}$. Combining with the isomorphism $DR_{N'O_K} \simeq (O_K/N'O_K) \times_{(O_K/N'O_K)^\times} C_{N'O_K}$ (cf.\ Proposition 7.2 \cite{Yalkinoglu13}), we deduce that $\widehat{f}_a$ indeed factors through a finite quotient $DR_K \twoheadrightarrow DR_{N'O_K}$. This completes the proof. 
\end{proof}

As seen above, the proof of the fact that the modular vectors $\widehat{f}_a$ are locally constant $K^{ab}$-valued $G_K$-equivariant was possible even with the abstract isomorphisms $\Theta_s: \comp^g/\Lambda_s \rightarrow A_s$. But our next task is to give a more explicit description of the values $\widehat{f}_a(\rho, s)$ by special values of ratios of theta functions $\theta^k(*; \tau_s)$ at an \emph{explicit} $\tau_s \in \upper_g$ rather than the galois conjugates $\theta^k(*; \tau_E)^{[s]^{-1}}$ as given in the right hand side of (\ref{galois conjugate expression of modular vector}); technically, this task is done by finding $\tau_s \in \upper_g$ such that the lattice $\Lambda_s = \Phi(\eta(s)O_E)$ is isomorphic to $\integer^g \tau_s + \integer^g \delta$. 

To this end, note first that, for each $s \in \Ad_{K,f}^\times$, the $\Ad_{\ratf, f}$-linear map $\Ad_{E, f} \ni t \mapsto \eta(s) \cdot t \in \Ad_{E, f}$ can be expressed as a $2g \times 2g$ matrix in $GL_{2g}(\Ad_{\ratf, f})$ with respect to the symplectic basis of $O_E$ for the Riemann form $\psi$, which we abusively denote also as $\eta(s) \in GL_{2g}(\Ad_{\ratf,f})$; this adelic matrix $\eta(s)$ has the following standard matrix decomposition, with which we will construct the target $\tau_s \in \upper_g$: 

\begin{lem}[cf.\ strong approximation theorem]
\label{decomposition}
Put $\tilde{\eta}(s) \in GL_{2g}(\Ad_{\ratf,f})$ as follows:
\begin{eqnarray}
\label{adjoint of eta}
 \tilde{\eta}(s)  &:=& \begin{pmatrix} 1 & 0 \\ 0 & \delta \end{pmatrix}^{-1} \cdot \eta(s) \cdot  \begin{pmatrix} 1 & 0 \\ 0 & \delta \end{pmatrix}
\end{eqnarray}
Then we have $\tilde{\eta}(s) = \tilde{u}_s \cdot \tilde{\alpha}_s$ with some $\tilde{u}_s \in GL_{2g}(\Ad_{\ratf, f})$ and $\tilde{\alpha}_s \in GSp^+_{2g}(\ratf)$ such that we have the equality $(\widehat{\integer}^g + \widehat{\integer}^g\delta) \cdot \tilde{u}_s = \widehat{\integer}^g + \widehat{\integer}^g \delta$.
\end{lem}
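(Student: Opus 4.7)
The plan is to interpret this as an instance of strong approximation for the symplectic similitude group $GSp_{2g}$ over $\ratf$. Concretely, I would show that (a) $\tilde{\eta}(s)$ lies in $GSp_{2g}(\Ad_{\ratf, f})$, (b) the stabilizer $\mathcal{U}$ of the lattice $\widehat{\integer}^g + \widehat{\integer}^g \delta$ in $GSp_{2g}(\Ad_{\ratf,f})$ is an open subgroup whose similitude character surjects onto $\widehat{\integer}^\times$, and (c) the factorization $GSp_{2g}(\Ad_{\ratf, f}) = \mathcal{U} \cdot GSp_{2g}^+(\ratf)$ holds; the desired decomposition of $\tilde{\eta}(s)$ then follows by direct combination.

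For step (a), I would use the explicit shape $\psi_e(\Phi(a), \Phi(b)) = \tr_{E/\ratf}(e\, a\, b^\iota)$ recalled in \S\ref{s2.2.1}. A short computation gives $\psi_e(\eta(s) u, \eta(s) v) = (\eta(s) \cdot \eta(s)^\iota) \cdot \psi_e(u, v)$, and the defining property of the reflex norm (Remark~\ref{reflex norm}) identifies $\eta(s)\,\eta(s)^\iota$ with the scalar $N_{K/\ratf}(s) \in \Ad_{\ratf,f}^\times$. Hence multiplication by $\eta(s)$ is a symplectic similitude of similitude factor $N_{K/\ratf}(s)$ for the form $\begin{pmatrix} 0 & \delta \\ -\delta & 0 \end{pmatrix}$, and conjugation by $\begin{pmatrix} 1 & 0 \\ 0 & \delta \end{pmatrix}$ normalizes this form into (a scaling of) the standard symplectic one, leaving $\tilde{\eta}(s)$ in $GSp_{2g}(\Ad_{\ratf,f})$ with the same similitude factor.

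For step (c), I would invoke strong approximation for $Sp_{2g}$ (which is simply connected and semisimple over $\ratf$, with $Sp_{2g}(\realf)$ noncompact), combined with $\Ad_{\ratf, f}^\times = \ratf_{>0}^\times \cdot \widehat{\integer}^\times$. Writing $N_{K/\ratf}(s) = c \cdot w$ with $c \in \ratf_{>0}^\times$ and $w \in \widehat{\integer}^\times$, I would pick $\tilde{\alpha}_s^{(0)} \in GSp_{2g}^+(\ratf)$ of similitude $c$ (e.g.\ $\diag[I_g, c\, I_g]$) and $v_0 \in \mathcal{U}$ of similitude $w$; then $\tilde{\eta}(s) \cdot v_0^{-1} \cdot (\tilde{\alpha}_s^{(0)})^{-1}$ lies in $Sp_{2g}(\Ad_{\ratf, f})$, to which strong approximation applies to produce a factorization $(\mathcal{U} \cap Sp_{2g}) \cdot Sp_{2g}(\ratf)$. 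Re-bundling the four factors then yields $\tilde{\eta}(s) = \tilde{u}_s \tilde{\alpha}_s$ with $\tilde{u}_s \in \mathcal{U}$ and $\tilde{\alpha}_s \in GSp_{2g}^+(\ratf)$.

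The delicate point I expect is the positivity of the similitude factor of $\tilde{\alpha}_s$: we genuinely need $\tilde{\alpha}_s \in GSp_{2g}^+(\ratf)$, not merely $GSp_{2g}(\ratf)$, because $\tilde{\alpha}_s$ will later act at the archimedean place to produce a point $\tau_s \in \upper_g$. This positivity is secured by extracting $c$ from $\ratf_{>0}^\times$ rather than $\ratf^\times$, which is permissible thanks to the decomposition $\Ad_{\ratf, f}^\times = \ratf_{>0}^\times \cdot \widehat{\integer}^\times$ and the totally-positive character of $N_{K/\ratf}(s)$ inherent to the CM situation. The stabilizer condition $(\widehat{\integer}^g + \widehat{\integer}^g \delta)\,\tilde{u}_s = \widehat{\integer}^g + \widehat{\integer}^g \delta$ is then tautological from the choice of $\mathcal{U}$.
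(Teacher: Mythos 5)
Your argument takes essentially the same route as the paper: first, the reflex-norm relation $\eta(s)\eta(s)^\iota = N_{K/\ratf}(s)$ shows that $\tilde{\eta}(s)$ is a symplectic similitude for the standard form with similitude factor $N_{K/\ratf}(s)$, and then the decomposition is a matter of strong approximation; the paper dispatches the second part in one line by citing Lemmas 8.3 and 9.5 of Shimura, whereas you unpack that reduction explicitly. Your unpacking is basically right, but two small points deserve repair.

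First, your re-bundling has a mis-ordering of factors. You form $\tilde{\eta}(s)\,v_0^{-1}(\tilde{\alpha}_s^{(0)})^{-1} \in Sp_{2g}(\Ad_{\ratf,f})$ and write it as $u'\beta$ with $u' \in \mathcal{U}\cap Sp_{2g}$, $\beta \in Sp_{2g}(\ratf)$; this gives $\tilde{\eta}(s) = u'\beta\tilde{\alpha}_s^{(0)}v_0$, and now $v_0$ sits stranded at the far right, so you cannot collect the adelic and rational parts without conjugating $v_0$ past the rational matrix $\beta\tilde{\alpha}_s^{(0)}$ — which does not stay in $\mathcal{U}$. The fix is to move $v_0^{-1}$ to the left: $v_0^{-1}\tilde{\eta}(s)(\tilde{\alpha}_s^{(0)})^{-1} \in Sp_{2g}(\Ad_{\ratf,f})$ still has similitude $1$, and after applying strong approximation as $u'\beta$ you get $\tilde{\eta}(s) = (v_0 u')\cdot(\beta\tilde{\alpha}_s^{(0)})$, which gives $\tilde{u}_s := v_0 u' \in \mathcal{U}$ and $\tilde{\alpha}_s := \beta\tilde{\alpha}_s^{(0)} \in GSp_{2g}^{+}(\ratf)$ directly.

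Second, the phrase about positivity being ``secured by \ldots the totally-positive character of $N_{K/\ratf}(s)$ inherent to the CM situation'' is a red herring. Here $N_{K/\ratf}(s)$ is a finite idele, not a rational number; the ability to choose $c \in \ratf_{>0}^\times$ comes purely from $\Ad_{\ratf,f}^\times = \ratf_{>0}^\times\cdot\widehat{\integer}^\times$, i.e.\ from $h(\ratf)=1$ together with $-1 \in \widehat{\integer}^\times$, and has nothing to do with CM. This is the point that makes $\nu(\tilde{\alpha}_s) = c > 0$, so $\tilde{\alpha}_s$ lands in $GSp_{2g}^{+}(\ratf)$ and later acts on $\upper_g$ as you say.
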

\begin{proof}
First, by the basic property of the reflex norm (cf.\ Remark \ref{reflex norm}), the $\Ad_{\ratf, f}$-linear map $\eta(s)$ in fact preserves the Riemann form $\psi=\psi_{\Ad_{\ratf, f}}: \Ad_{E, f} \times \Ad_{E, f} \rightarrow \Ad_{\ratf, f}$ up to a fixed scalar over $\Ad_{\ratf, f}$. That is, for any $u, v \in \Ad_{E, f}$:
\begin{eqnarray}
 \psi(u \cdot \eta(s), v \cdot \eta(s)) &=& \psi(u, v \cdot \eta(s)\eta(s)^\iota)\\
 &=& \psi(u, v\cdot N_{K/\ratf}(s))\\
 &=& N_{K/\ratf}(s) \cdot \psi(u, v) 
\end{eqnarray}
By definition of symplectic basis for the Riemann form $\psi$, this means that, in terms of the matrix $\eta(s) \in GL_{2g}(\Ad_{\ratf, f})$: 
\begin{eqnarray}
 \eta(s) \cdot \begin{pmatrix} 0 & \delta \\ - \delta & 0 \end{pmatrix} \cdot \eta(s)^t &=& N_{K/\ratf}(s) \cdot \begin{pmatrix} 0 & \delta \\ - \delta & 0 \end{pmatrix}
\end{eqnarray}
Note also that we have:
\begin{eqnarray}
\begin{pmatrix} 0 & \delta \\ -\delta & 0 \end{pmatrix} &=& \begin{pmatrix} 1 & 0 \\ 0 & \delta \end{pmatrix} \cdot \begin{pmatrix} 0 & 1 \\ -1 & 0 \end{pmatrix} \cdot \begin{pmatrix} 1 & 0 \\ 0 & \delta \end{pmatrix} 
\end{eqnarray}
From this, we easily deduce $\tilde{\eta}(s) \in GSp_{2g}(\Ad_{\ratf,f})$. The decomposition in the claim is a consequence of the strong approximation theorem for $GSp_{2g}(\Ad_{\ratf, f})$ (cf.\ Lemma 8.3 and Lemma 9.5 \cite{Shimura_arithmeticity}). 
\end{proof}

For each $s \in \Ad_{K,f}^\times$, let $u_s$ and $\alpha_s$ be the similar transformations of $\tilde{u}_s$ and $\tilde{\alpha}_s$ as the above $\eta(s)$ of $\tilde{\eta}(s)$, whence $\eta(s) = u_s \cdot \alpha_s$ and $(\widehat{\integer}^g + \widehat{\integer}^g) u_s = \widehat{\integer}^g + \widehat{\integer}^g$; also, partition $\alpha_s$ into the following form:
\begin{eqnarray}
 \alpha_s &=& \begin{pmatrix} a_s & b_s \\ c_s & d_s \end{pmatrix}.
\end{eqnarray}
We then define $\tau_s:= \alpha_s (\tau_E)$ as $\tau_s := (a_s \tau_E + b_s\delta) \cdot (c_s \tau_E + d_s \delta)^{-1} \delta$, which is indeed a member of $\upper_g$ by $\tau_E \in \upper_g$ and $\tilde{\alpha}_s \in GSp^+_{2g}(\ratf)$; we see that this $\tau_s \in \upper_g$ is our target one. 

\begin{prop}[see also Theorem 2.4, Streng \cite{Streng}]
With notations as in Definition \ref{defn of modular vector}, we have the following equation:
\begin{eqnarray}
\label{target}
 \widehat{f}_a (\rho, s) &=& f(\theta^k (a''_1 \tau_s + a_2'' \delta; \tau_s)). 
\end{eqnarray}
where $\tau_s = \alpha_s(\tau_E)$ and $a'' = (a_1'', a_2'') \in \ratf^{2g}/\integer^{2g}$ is defined as the image of $a \in \ratf^{2g}/\integer^{2g}$ under the following map:
\begin{equation}
\xymatrix{\ratf^{2g}/\integer^{2g} \ar[r]^{\eta(\rho)} &\ratf^{2g}/\integer^{2g} \ar[r]^{u_s} & \ratf^{2g}/\integer^{2g}}.
\end{equation}
That is, $a'' = a \cdot \eta(\rho) u_s \in N^{-1}\integer^{2g}/\integer^{2g}$. 
\end{prop}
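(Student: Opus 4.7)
The starting point is the formula (\ref{galois conjugate expression of modular vector}) obtained inside the proof of Proposition \ref{basic property}: writing $a' := a \cdot \eta(\rho) \in N^{-1}\integer^{2g}/\integer^{2g}$, one has
\begin{equation*}
\widehat{f}_a(\rho, s) \;=\; f\bigl(\theta^k(a_1'\tau_E + a_2'\delta;\, \tau_E)\bigr)^{[s]^{-1}}.
\end{equation*}
Hence the target (\ref{target}) is equivalent to the identity
\begin{equation*}
f\bigl(\theta^k(a_1'\tau_E + a_2'\delta;\, \tau_E)\bigr)^{[s]^{-1}}
\;=\; f\bigl(\theta^k(a_1''\tau_s + a_2''\delta;\, \tau_s)\bigr)
\end{equation*}
with $a'' = a' \cdot u_s$. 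The plan is to derive this via a Siegel-type Shimura reciprocity, using the decomposition $\eta(s) = u_s \cdot \alpha_s$ of Lemma \ref{decomposition} to split the $[s]^{-1}$-action into a level part ($u_s$) acting on theta characteristics and a rational symplectic part ($\alpha_s$) moving the CM base point from $\tau_E$ to $\tau_s = \alpha_s(\tau_E)$.

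The first step is to reduce from the rational function $f$ to the theta ratios (\ref{theta ratio}). Writing $f(\theta^k) = g(\theta^k)/h(\theta^k)$ with homogeneous $g, h$ of the same degree and dividing both by a fixed non-vanishing $\theta^l$, the value $f(\theta^k(a_1'\tau_E + a_2'\delta; \tau_E))$ becomes the same algebraic combination of the values $\theta^{k/l}_{a'}(\tau_E)$, which are special values of Siegel modular functions depending on $a'$ (cf.\ Theorem 27.13 \cite{Shimura_abelian}). Since the polynomials $g, h$ are fixed, and since galois conjugation by $[s]^{-1}$ and the substitution of $\tau_s$ for $\tau_E$ both commute with algebraic combinations, it is enough to prove the level-free identity
\begin{equation*}
\theta^{k/l}_{a'}(\tau_E)^{[s]^{-1}} \;=\; \theta^{k/l}_{a' u_s}(\tau_s).
\end{equation*}

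The second step, which is the substance of the argument, is to apply Shimura's reciprocity for Siegel modular functions. Lemma \ref{decomposition} provides $u_s \in GSp_{2g}(\Ad_{\ratf, f})$ preserving the lattice $\widehat{\integer}^g + \widehat{\integer}^g \delta$, so that it acts on $\ratf^{2g}/\integer^{2g}$-level structures by the row action $a' \mapsto a' u_s$; and $\alpha_s \in GSp_{2g}^{+}(\ratf)$ sending $\tau_E$ to $\tau_s$. The Siegel reciprocity law (cf.\ Streng \cite{Streng}, following Shimura \cite{Shimura_arithmeticity}) then gives, for any Siegel modular function $F_{a'}$ of the theta-ratio type at characteristic $a'$, the equality $F_{a'}(\tau_E)^{[s]^{-1}} = F_{a' u_s}(\alpha_s(\tau_E)) = F_{a' u_s}(\tau_s)$; specialising to $F_{a'} = \theta^{k/l}_{a'}$ yields the required level-free identity, and thence (\ref{target}).

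The main obstacle is packaging step two cleanly. Two compatibilities must be checked: (i) that the $2g \times 2g$ matrix representation of $\eta(s)$ with respect to the chosen symplectic basis of $O_E$ realises the galois action by $[s]^{-1}$ on the CM values $\theta^{k/l}_{a'}(\tau_E)$ in the standard ``$u_s$ permutes characteristics, $\alpha_s$ moves the base point'' fashion; and (ii) that the level-structure action of $GSp_{2g}(\widehat{\integer})$ on the specific generators $\theta^{k/l}_{a'}$ is precisely the substitution $a' \mapsto a' u_s$. Both compatibilities are the content of Siegel-case reciprocity in the explicit form recorded by Streng \cite{Streng}; the role of Lemma \ref{decomposition}, whose proof invoked strong approximation for $GSp_{2g}$, is precisely to write $\eta(s)$ in the shape in which that reciprocity applies directly.
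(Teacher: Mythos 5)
Your proposal is correct and follows essentially the same route as the paper: both reduce the rational function $f$ to the theta ratios $\theta^{k/l}_a$, invoke the $\eta(s) = u_s \cdot \alpha_s$ decomposition from Lemma \ref{decomposition}, and apply Shimura's reciprocity for Siegel modular functions to obtain the key identity $\theta^{k/l}_{a'}(\tau_E)^{[s]^{-1}} = \theta^{k/l}_{a'u_s}(\tau_s)$. The paper packages this as a commutative diagram passing through the factorisation $\Lambda_s \xrightarrow{\chi_s} \integer^g\tau_s + \integer^g\delta$ and cites Lemma 8.6 and Theorem 8.10 of \cite{Shimura_arithmeticity} where you cite Streng \cite{Streng}, but the substance is the same.
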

\begin{proof}
Again recall the following diagram defining the value of $\widehat{f}_a(\rho, s)$:
\begin{equation}
\xymatrix{E/O_E \ar[r]^{\eta(\rho)} & E/O_E \ar[d]_{\eta(s)} \ar[r]^\Phi & \comp^g/\Lambda_E \ar[r]^{\Theta_E} & A_E \ar[d]^\sigma \ar[r]^{f|_{A_E}} & \comp \ar[d]^\sigma \\
& E/\eta(s)O_E \ar[r]_\Phi & \comp^g/\Lambda_s \ar[r]_{\Theta_s} & A_s \ar[r]_{f|_{A_s}} & \comp
}
\end{equation}
Here the lattice $\Lambda_s = \Phi(\eta(s) O_E)$ is equal to $(\integer^g, \integer^g) \cdot u_s \alpha_s \cdot (\tau_E, \delta)^t = (\integer^g, \integer^g) \cdot \alpha_s \cdot (\tau_E, \delta)^t$ by our construction (Lemma \ref{decomposition}), which is further isomorphic to $(\integer^g, \integer^g) \cdot (\tau_s, \delta)^t = \integer^g \tau_s + \integer^g \delta$ by the right action of $\chi_s := (c_s \tau_E + d_s\delta)^{-1}\delta$. With this in mind, we consider yet another diagram:
\begin{equation}
\xymatrix{E/O_E \ar[r]^{\eta(\rho)} & E/O_E \ar[d]_{\eta(s)} \ar[r]^\Phi & \comp^g/\Lambda_E \ar[rr]^{\Theta_E} & & A_E \ar[d]^{[s]^{-1}} \ar[r]^{f|_{A_E}} & \comp \ar[d]^\sigma \\
& E/\eta(s)O_E \ar[r]_\Phi & \comp^g/\Lambda_s \ar[r]^{\hspace{-0.6cm}\chi_s}_{\hspace{-0.8cm}\simeq} & \comp^g/(\integer^g \tau_s + \integer^g \delta) \ar[r]_{\hspace{0.8cm}\simeq}^{\hspace{1cm}\Theta_{\tau_s}} & A_{\tau_s}  \ar[r]_{f|_{A_{\tau_s}}} & \comp
}
\end{equation}
By the defining property of $\Theta_s: \comp^g/\Lambda_s \rightarrow A_s$, in order to prove (\ref{target}), it suffices to prove that this diagram commutes. 

We see that this commutativity follows from the fact that the ratios $\theta^{k/l}_a$ (\ref{theta ratio}) of homogeneous coordinates $[\theta^k]$ at torsion points of $A_E \subseteq \proj^d(\comp)$ define modular functions on $\upper_g$ (cf.\ Theorem 27.13 \cite{Shimura_abelian}), together with Shimura's reciprocity law (Theorem 9.6 \cite{Shimura_arithmeticity}). 
That is, to be more specific, first note that since $f(\theta^k) = g(\theta^k)/h(\theta^k)$ is a rational function of homogeneous degree $0$, it can be in fact composed of the ratios $\theta^k/\theta^l$ of theta functions $\theta^k$; thus, $f(\theta^k(a_1 \tau + a_2 \delta; \tau))$ also defines a modular function on $\upper_g$ for each fixed $a \in \ratf^{2g}/\integer^{2g}$, which we may denote as $f(\theta^{k/l}_a(\tau))$. Moreover, since $f$ is defined over $K$ and $\sigma$ fixes $K$, we have $f(\theta_a^{k/l}(\tau))^\sigma = f((\theta_a^{k/l}(\tau))^{[s]^{-1}})$. Then applying Shimura's reciprocity law for these modular functions $\theta^{k/l}_a$ and $[s] \in G_K^{ab}$, we deduce that the middle rectangle commutes; in fact see Lemma 8.6 and Theorem 8.10 \cite{Shimura_arithmeticity}\footnote{Note that we use the lattice $\integer^g + \integer^g \delta$ rather than $\integer^g + \integer^g$; accordingly, we use matrices transformed by $\begin{pmatrix} 1 & 0\\ 0 & \delta \end{pmatrix}$ when we apply these theorems.} for a careful comparison of the definition of galois conjugate $(\theta^{k/l}_a)^{\eta(s)}$ of the modular functions $\theta^{k/l}_a$ with our composition of the above maps; in particular we have $(\theta^{k/l}_a)^{\eta(s)} (\tau_E) = \theta^{k/l}_{a u_s}(\alpha_s(\tau_E)) = \theta^{k/l}_{a u_s} (\tau_s)$. This completes the proof. 
\end{proof}

%%%%%%%%%%%%%%%%%%%%%%%%%%%%%%%%%%
\section{Galois correspondence}
\label{s4}
Unlike the case of imaginary quadratic fields, the modular vectors constructed above do not generate all algebraic Witt vectors over a CM field $K$ of higher degree; therefore, it becomes reasonable to ask what algebraic Witt vectors are generated by our modular vectors. Denoting by $M_K$ the set of modular vectors, we can see by definition that $M_K$ is in fact a subalgebra of $E_K$ over $K$; moreover, it can be easily seen that $M_K$ is a $\Lambda$-subalgebra of $E_K$ (i.e.\ closed under the action of $\psi_\idp$'s), which follows from the same argument as Lemma 4.3.5 \cite{Uramoto20}. Therefore, the \emph{general galois correspondence} mentioned in Remark 3.1.8 \cite{Uramoto20} implies that we have a dual characterization of modular vectors in terms of profinite quotient of $DR_K$. 

This section is devoted to this problem and divided into two subsections. In the first subsection (\S \ref{s4.1}), we give a proof of the general galois correspondence mentioned in Remark 3.1.8 \cite{Uramoto20} without proof, which claims a canonical bijective correspondence between (i) $\Lambda$-subalgebras of $E_K$, (ii) profinite quotients of $DR_K$, and (iii) semi-galois full subcategories of $\C_K$. This general correspondence, however, is not quite satisfactory in itself in that it just tells us only an existence of such a quotient but not its explicit structure. To remedy this issue, the second subsection (\S \ref{s4.2}) is devoted to an explicit characterization of modular vectors, where we give an adelic description of the profinite quotient of $DR_K$ that corresponds to the $\Lambda$-subalgebra of modular vectors $M_K \subseteq E_K$ under our general galois correspondence. 

%%%%%%
\subsection{General galois correspondence}
\label{s4.1}
In this subsection, we allow $K$ to be an arbitrary number field; our goal here is to prove the following general galois correspondence mentioned in Remark 3.1.8 \cite{Uramoto20}:

\begin{prop}[galois correspondence]
\label{galois correspondence}
There exists a canonical bijective correspondence between the following three classes of objects:
\begin{enumerate}
\item $\Lambda$-subalgebras of $E_K$;
\item profinite quotients of $DR_K$; and
\item semi-galois full subcategories of $\C_K$. 
\end{enumerate}
\end{prop}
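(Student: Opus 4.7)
My strategy is to establish the two correspondences (1)$\leftrightarrow$(2) and (2)$\leftrightarrow$(3) separately, exploiting the identification $E_K \simeq \Hom_{G_K}(DR_K, K^{ab})$ from Theorem \ref{identification} and the reconstruction theorem for semi-galois categories proved in \cite{Uramoto17}, which gives an equivalence of $\C_K$ with the category of finite continuous $DR_K$-sets via $\F_K$.

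For (2)$\leftrightarrow$(3), given a profinite quotient $q: DR_K \twoheadrightarrow Q$, I would let $\C_Q \subseteq \C_K$ be the full subcategory of those $X$ for which the action of $\pi_1(\C_K, \F_K) \simeq DR_K$ on $\F_K(X)$ factors through $q$. A direct check shows that $\C_Q$ is a semi-galois full subcategory with fundamental monoid (computed via the restricted fiber functor) canonically isomorphic to $Q$. Conversely, any semi-galois full subcategory $\mathcal{D} \subseteq \C_K$ induces a canonical surjection $DR_K \twoheadrightarrow \pi_1(\mathcal{D}, \F_K|_\mathcal{D})$ by the functoriality of the fundamental monoid under inclusions of semi-galois categories; these two assignments are mutually inverse by the general semi-galois reconstruction theorem of \cite{Uramoto17}.

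For (1)$\leftrightarrow$(2), I would assign to each $\Lambda$-subalgebra $A \subseteq E_K$ the equivalence relation $\sim_A$ on $DR_K$ defined by $x \sim_A y$ iff $\xi(x) = \xi(y)$ for all $\xi \in A$. Local constancy of the functions in $A$ makes $\sim_A$ closed, while closure of $A$ under the $\psi_\idp$'s (which translate into the operators $\xi \mapsto \xi(\idp \cdot -)$) makes $\sim_A$ a monoid congruence; hence the quotient $Q_A := DR_K/\sim_A$ is a profinite quotient monoid. Conversely, to any profinite quotient $DR_K \twoheadrightarrow Q$ I would attach the pullback subalgebra $A_Q := \Hom_{G_K}(Q, K^{ab})$, which is visibly a $\Lambda$-subalgebra of $E_K$.

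\textbf{Main obstacle.} The substantive step is the equality $A_{Q_A} = A$. To handle it, I would reduce to the finite level: by the construction of galois objects used in the proof of Theorem \ref{identification}, $A$ is the filtered union of the $K$-algebras $X_\Xi = K[\psi_\ida \xi \mid \xi \in \Xi, \ida \in I_K]$ as $\Xi$ ranges over finite subsets of $A$, each $X_\Xi$ being a galois object of $\C_K$; correspondingly, $Q_A$ is the inverse limit of the finite quotients $Q_\Xi$ of $DR_K$ through which the functions in $\Xi$ (and their $\psi_\idp$-translates) factor. Applying Theorem \ref{identification} to the semi-galois full subcategory generated by $X_\Xi$ identifies $X_\Xi$ with $\Hom_{G_K}(Q_\Xi, K^{ab})$; passing to the filtered colimit over $\Xi$ then produces $A = \Hom_{G_K}(Q_A, K^{ab}) = A_{Q_A}$. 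The opposite identity $\sim_{A_Q}$ equals the kernel of $DR_K \twoheadrightarrow Q$ is comparatively easy, since $K^{ab}$ is large enough to furnish $G_K$-equivariant $K^{ab}$-valued functions on $Q$ separating its $G_K$-orbits, and hence its points.
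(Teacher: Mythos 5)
Your proposal is correct and follows essentially the same route as the paper: it derives the (2)$\leftrightarrow$(3) correspondence from the general duality theorem for semi-galois categories, sets up the (1)$\leftrightarrow$(2) correspondence via the congruence induced by a $\Lambda$-subalgebra (equivalently defined on $I_K$ or on $DR_K$), and handles the non-trivial round-trip $A = A_{Q_A}$ by reducing to the galois objects $X_\Xi$ and applying the identification $E_K \simeq \Hom_{G_K}(DR_K, K^{ab})$. The minor differences are presentational: the paper phrases the congruence as $\psi_\ida\xi = \psi_\idb\xi$ on $I_K$ and passes to the inverse limit $D_E = \lim_\Xi D_\Xi$, whereas you define $\sim_A$ directly on $DR_K$, but these produce the same quotient and the essential steps coincide.
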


Note that the correspondence between the second and the third is just a direct consequence of the general duality theorem between profinite monoids and semi-galois categories \cite{Uramoto16,Uramoto17} together with the fact that the fundamental monoid $\pi_1(\C_K, \F_K)$ of $\C_K$ is isomorphic to $DR_K$ \cite{Borger_Smit08, Borger_Smit11}; therefore, in order to prove this proposition, it suffices to show a bijective correspondence between the first and the second only. For this proof, however, the concept of \emph{galois objects} in the semi-galois category $\C_K$ plays an auxiliary key role. 

We start with the correspondence from the first to the second; and to this end, we need to recall a few basic concepts (with notations slightly different from those in \cite{Uramoto20}). 

\begin{defn}[monoid congruence on $I_K$]
Let $\Xi \subseteq E_K$ be a finite subset of $E_K$. Then, define the monoid congruence $\ida \sim_\Xi \idb$ for $\ida, \idb \in I_K$ as follows:
\begin{eqnarray}
 \ida \sim_\Xi \idb &\Leftrightarrow& \forall \xi \in \Xi, \hspace{0.2cm} \psi_\ida \xi = \psi_\idb \xi. 
\end{eqnarray}
Also, denote by $D_\Xi$ the finite quotient monoid $D_\Xi := I_K / \sim_\Xi$. 
\end{defn}

Concerning this monoid congruence, note that, for two finite subsets $\Xi, \Xi' \subseteq E_K$ with $\Xi \subseteq \Xi'$, we have a canonical surjective monoid homomorphism $r^{\Xi'}_\Xi: D_{\Xi'} \twoheadrightarrow D_\Xi$; therefore, with respect to the inclusion relation $\Xi \subseteq \Xi'$ of finite subsets of $E_K$, we have an inverse system $\{D_\Xi, r^{\Xi'}_\Xi\}$ of finite monoids $D_{\Xi}$. The fact that $E_K$ is the union of the galois objects $X_\Xi$ of $\C_K$ (\S 3.1 \cite{Uramoto20}) implies in particular that $DR_K$ is isomorphic to the inverse limit of this system $\{D_\Xi, r^{\Xi'}_\Xi\}$ when $\Xi \subseteq E_K$ runs over all finite subsets of $E_K$; thus by restricting the range where $\Xi \subseteq E_K$ are allowed to run, we naturally obtain a quotient of $DR_K$: 

\begin{defn}[dual to $\Lambda$-subalgebra of $E_K$]
Let $E \subseteq E_K$ be a $\Lambda$-subalgebra of $E_K$; then define $D_E$ as the following profinite monoid:
\begin{eqnarray}
 D_E &:=& \lim_{\leftarrow \Xi \subseteq E} D_\Xi. 
\end{eqnarray}
\end{defn}

Conversely, given a profinite quotient monoid $DR_K \twoheadrightarrow D$, we construct a $\Lambda$-subalgebra $E_D \subseteq E_K$ so that this construction $D \mapsto E_D$ gives the inverse of the above construction $E \mapsto D_E$. To this end, recall (\S \ref{s2.1}) that we can identify $E_K$ with the $\Lambda$-ring $\Hom_{G_K}(DR_K, K^{ab})$ of locally constant $K^{ab}$-valued $G_K$-equivariant functions of $DR_K$; under this identification, we can naturally describe the target correspondence $D \mapsto E_D$ as follows:

\begin{defn}[dual to quotient of $DR_K$]
Let $DR_K \twoheadrightarrow D$ be a profinite quotient monoid of $DR_K$; then we define a $\Lambda$-subalgebra $E_D \subseteq E_K$ as follows:
\begin{eqnarray}
 E_D &:=& \biggl\{ \xi \in \Hom_{G_K}(DR_K, K^{ab}) \mid \textrm{$\xi$ factors through $DR_K \twoheadrightarrow D$} \biggr\}.
\end{eqnarray}
\end{defn}

\begin{lem}
For any $\Lambda$-subalgebra $E \subseteq E_K$ we have the identity $E = E_{D_E}$. 
\end{lem}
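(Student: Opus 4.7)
The plan is to prove $E = E_{D_E}$ by establishing both inclusions via the identification $E_K \simeq \Hom_{G_K}(DR_K, K^{ab})$ of Theorem \ref{identification}, throughout treating elements of $E_K$ as locally constant, $G_K$-equivariant, $K^{ab}$-valued functions on $DR_K$.

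For the inclusion $E \subseteq E_{D_E}$, I pick any $\xi \in E$ and take the singleton $\Xi = \{\xi\} \subseteq E$. The congruence $\sim_\Xi$ is defined so that $\ida \sim_\Xi \idb$ iff $\psi_\ida \xi = \psi_\idb \xi$; specializing this equality of functions on $DR_K$ at the identity yields $\xi(\ida) = \xi(\idb)$, and by left-compatibility of the monoid congruence $\sim_\Xi$ the same equality holds after left-multiplication by any $s \in DR_K$. Hence $\xi: DR_K \to K^{ab}$ factors through $DR_K \twoheadrightarrow D_{\{\xi\}}$. Since $\{\xi\}$ is one of the finite subsets of $E$ appearing in the inverse system that defines $D_E = \lim_{\Xi \subseteq E} D_\Xi$, there is a canonical continuous projection $D_E \twoheadrightarrow D_{\{\xi\}}$, so $\xi$ factors through $D_E$ as well; thus $\xi \in E_{D_E}$.

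The converse $E_{D_E} \subseteq E$ is the substantive direction and rests on the following identification established in \cite{Uramoto20}: for any finite $\Xi \subseteq E_K$, the $\Lambda$-subalgebra $X_\Xi = K[\psi_\ida \xi' \mid \xi' \in \Xi, \ida \in I_K]$ is a galois object of $\C_K$ whose associated fiber is precisely $D_\Xi$ as a $DR_K$-set; under the identification of Theorem \ref{identification} it therefore corresponds exactly to the $K$-subalgebra of those $G_K$-equivariant $K^{ab}$-valued functions on $DR_K$ which factor through $DR_K \twoheadrightarrow D_\Xi$. Granting this, take any $\xi \in E_{D_E}$: by hypothesis it factors through $DR_K \twoheadrightarrow D_E$, and being locally constant it factors through some finite continuous quotient of the profinite monoid $D_E$, which by cofinality of the $\Xi$'s in $E$ must be $D_\Xi$ for some finite $\Xi \subseteq E$. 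The above identification then gives $\xi \in X_\Xi$, and since $E$ is a $\Lambda$-subalgebra of $E_K$ containing $\Xi$ we have $X_\Xi \subseteq E$, whence $\xi \in E$.

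The principal technical ingredient is the galois-object identification $X_\Xi = \Hom_{G_K}(D_\Xi, K^{ab})$; this is a consequence of the structural results on galois objects in $(\C_K, \F_K)$ developed in \cite{Uramoto17, Uramoto20}, where one in effect observes that $X_\Xi \otimes_K K^{ab}$ splits as a product of copies of $K^{ab}$ indexed by $D_\Xi$ carrying the expected $DR_K$- and $G_K$-actions. Once this is invoked, both inclusions reduce to short cofinality arguments between the directed system of finite subsets $\Xi \subseteq E$ and the inverse system of finite monoid quotients through which locally constant functions factor, and no further substantial input is needed.
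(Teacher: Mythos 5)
Your proof is correct and takes essentially the same route as the paper: both inclusions hinge on viewing Witt vectors as $G_K$-equivariant functions on $DR_K$, on the galois-object identification $X_\Xi = \Hom_{G_K}(D_\Xi, K^{ab})$, and on closure of $E$ under the $\psi_\idp$'s to conclude $X_\Xi \subseteq E$. The one slight imprecision is in the converse direction, where you say the finite quotient through which $\xi$ factors ``must be $D_\Xi$'' for some finite $\Xi \subseteq E$ --- in fact it need only be \emph{dominated} by some $D_\Xi$ (i.e.\ $D_\xi$ is a quotient of $D_\Xi$), which is what the inverse-limit description of $D_E$ actually gives, but this is all you need since $\xi$ then still factors through $D_\Xi$.
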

\begin{proof}
Given $\xi \in E$, the finite monoid $D_{\xi} := D_{\{\xi\}}$ is by definition a quotient of $D_E$. Since $\xi$, when viewed as a function on $DR_K$, factors through $D_\xi$, it follows that $\xi$ also factors through $D_E$, hence $\xi \in E_{D_E}$, concluding $E \subseteq E_{D_E}$. 

Conversely, given $\xi \in E_{D_E}$ that is viewed as a function on $DR_K$, the definition of $E_{D_E}$ implies that $\xi: DR_K \rightarrow K^{ab}$ factors through $DR_K \twoheadrightarrow D_E$; therefore, $D_\xi$ is a finite quotient of $D_E$. Then, by definition of $D_E$, this also implies that there exists a finite subset $\Xi \subseteq E$ such that $D_\xi$ is a finite quotient of $D_\Xi$. Since $\Hom_{G_K}(D_\Xi, K^{ab})$ is equal to the galois object $X_\Xi = K[I_K \Xi]$ (\S \ref{s2.1}), we obtain $\xi \in X_\Xi$. But since $E$ is now a $\Lambda$-subalgebra thus closed under $\psi_\idp$'s, we also have $X_\Xi \subseteq E$, hence $\xi \in X_\Xi \subseteq E$, concluding $E_{D_E} \subseteq E$. This completes the proof. 
\end{proof}

\begin{lem}
For any profinite quotient monoid $DR_K \twoheadrightarrow D$, we have a canonical isomorphism $D \simeq D_{E_D}$. 
\end{lem}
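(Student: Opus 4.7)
The plan is to establish a canonical continuous surjection $D \twoheadrightarrow D_{E_D}$ and then show that it is injective, giving the desired isomorphism of profinite monoids.

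For the surjection, I would observe that each $\xi \in E_D$ by definition factors through $\pi: DR_K \twoheadrightarrow D$. Hence for any finite $\Xi \subseteq E_D$, whenever $\pi(x) = \pi(y)$ in $DR_K$, the products $xz$ and $yz$ have the same image in $D$ for every $z \in DR_K$, so $\xi(xz) = \xi(yz)$ for every $\xi \in \Xi$. By density of $I_K$ in $DR_K$ and the continuity of $\xi$, this shows that the congruence $\sim_\Xi$ is coarser than the equivalence induced by $\pi$, so $DR_K \twoheadrightarrow D_\Xi$ factors through $D$. Passing to the inverse limit over all finite $\Xi \subseteq E_D$ yields the continuous surjection $D \twoheadrightarrow D_{E_D}$.

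For injectivity, since both sides are profinite, it suffices to show that functions in $E_D$ separate points of $D$. By choosing, for any two prescribed distinct elements of $D$, a finite monoid quotient $D \twoheadrightarrow F$ on which they remain distinct, this reduces to showing that $G_K$-equivariant $K^{ab}$-valued functions on an arbitrary finite $G_K$-set $F$ separate its points, since any such function pulls back via $DR_K \twoheadrightarrow D \twoheadrightarrow F$ to an element of $E_D$. This separation is the main technical step.

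To carry out the separation, I would use that $G_K$ acts on $F$ through the abelian quotient $G_K \twoheadrightarrow G_K^{ab}$, so the stabilizer $H_{\bar{x}}$ of any $\bar{x} \in F$ is an open subgroup of $G_K$ containing $[G_K, G_K]$, corresponding to a finite abelian extension $L_{\bar{x}} := (K^{ab})^{H_{\bar{x}}}$ of $K$. By the primitive element theorem, pick $\alpha \in L_{\bar{x}}$ with $K(\alpha) = L_{\bar{x}}$, so that the stabilizer of $\alpha$ in $G_K$ equals $H_{\bar{x}}$ exactly. Define $\xi: F \to K^{ab}$ by $\xi(\bar{x} \cdot g) := \alpha^g$ on the $G_K$-orbit of $\bar{x}$ (well-defined and $G_K$-equivariant by the stabilizer condition) and $\xi \equiv 0$ elsewhere. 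Then $\xi$ distinguishes $\bar{x}$ from any other element of $F$: on a different orbit $\xi$ vanishes, while on the same orbit $\bar{y} = \bar{x} \cdot g$ with $g \notin H_{\bar{x}}$ gives $\xi(\bar{y}) = \alpha^g \neq \alpha = \xi(\bar{x})$. A finite collection of such functions assembles into a $\Xi \subseteq E_D$ with $D_\Xi \twoheadrightarrow F$, whence $D \hookrightarrow D_{E_D}$ and the desired isomorphism $D \simeq D_{E_D}$.
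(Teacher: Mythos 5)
Your proposal is correct and follows the same overall skeleton as the paper's argument (establish that the two quotient maps $DR_K \twoheadrightarrow D$ and $DR_K \twoheadrightarrow D_{E_D}$ have the same kernel on the dense submonoid $I_K$, then conclude by continuity), and your surjection step is essentially identical to the paper's first half. Where you diverge is the injection step. The paper argues: pick a finite quotient $D \twoheadrightarrow H$ on which $\ida, \idb$ remain distinct, and then appeal to the fact that $X_H := \Hom_{G_K}(H, K^{ab})$ is a \emph{galois object} of $\C_K$ on which $\psi_\ida$ and $\psi_\idb$ must act differently — i.e., it invokes the already-established duality theory between profinite monoids and semi-galois categories to conclude that the representation is faithful. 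You instead prove the needed separation of points directly: on a finite right $G_K$-set $F$ through which the action factors via $G_K^{ab}$, the stabilizer of a point corresponds to a finite abelian extension $L_{\bar{x}}/K$, and a primitive element $\alpha$ of $L_{\bar{x}}$ (extended by zero off the orbit) gives a $G_K$-equivariant $K^{ab}$-valued function distinguishing $\bar{x}$ from everything else; these functions pull back to elements of $E_D$ by Theorem \ref{identification}. This is a more elementary, self-contained derivation of the faithfulness that the paper imports wholesale from the duality machinery; it is slightly longer but has the virtue of making explicit exactly which algebraic Witt vectors realize the separation, rather than merely asserting their existence. One small caveat worth being careful about: the well-definedness and equivariance of your $\xi(\bar{x}\cdot g):=\alpha^g$ depend on a consistent choice of left/right conventions for the two $G_K$-actions (on $F$ and on $K^{ab}$), but this is routine.
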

\begin{proof}
First assume that two ideals $\ida, \idb \in I_K \hookrightarrow DR_K$ go into the same element in $D$ under the quotient $DR_K \twoheadrightarrow D$; we show that $\ida, \idb$ go to the same element in $D_{E_D}$ as well under the quotient $DR_K \twoheadrightarrow D_{E_D}$. Now the assumption implies that the operators $\psi_\ida, \psi_\idb$ define the same action on $E_D = \Hom_{G_K}(D, K^{ab})$ by definition (cf.\ \S \ref{s2.1}); therefore, so they do on $X_\Xi \subseteq E_D$ for any finite subset $\Xi \subseteq E_D$; in particular, we have $\psi_\ida \xi = \psi_\idb \xi$ for any $\xi \in \Xi \subseteq X_\Xi$, which means that $\ida \sim_\Xi \idb$. Now since $\Xi$ is any finite subset of $E_D$, this eventually deduces that $\ida, \idb$ are mapped to the same element in $D_{E_D}$. 

Conversely, assume that $\ida, \idb$ are mapped onto two different elements in $D$, whence we have a finite quotient $D \twoheadrightarrow H$ so that $\ida, \idb$ are still different in $H$ as well; then $\psi_\ida$ and $\psi_\idb$ define different endomorphisms on the galois object $X_H:= \Hom_{G_K}(H, K^{ab})$ in $\C_K$, thus we have $\psi_\ida \xi \neq \psi_\idb \xi$ for some $\xi \in X_H$. Since $X_H$ is now a $\Lambda$-subalgebra of $E_D$, this means that $\ida$ and $\idb$ are not congruent with respect to $\sim_{\{\xi\}}$ for the singleton $\{\xi\} \subseteq E_D$; that is, $\ida \neq \idb$ in the finite monoid $D_{\{\xi\}}$. Again, by definition of $D_{E_D} = \lim_{\Xi \subseteq E_D} D_\Xi$, this finite monoid $D_{\{\xi\}}$ is a quotient of $D_{E_D}$. Since $\ida \neq \idb \in D_\xi$, it follows that $\ida \neq \idb \in DR_{E_D}$ too. 

The above arguments show that the two quotients $DR_K \twoheadrightarrow D$ and $DR_K \twoheadrightarrow D_{E_D}$ have the same kernel on the discrete submonoid $I_K$; but since $I_K$ is dense in $DR_K$, it follows that they have the same kernel on $DR_K$ as well. Thus they must be isomorphic.
\end{proof}

Basically, we use the above galois correspondence in the following form:

\begin{cor}[dual characterization]
Let $E \subseteq E_K$ be a $\Lambda$-subalgebra such that $E$ corresponds to a profinite quotient $DR_K \twoheadrightarrow D$ under the above galois correspondence. Then $\xi \in E_K$ is a member of $E$ if and only if $D_\xi$ is a quotient of $D$. 
\end{cor}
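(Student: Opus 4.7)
The plan is to reduce the corollary directly to the two lemmas just proved (which together yield the galois correspondence $E = E_{D_E}$ and $D \simeq D_{E_D}$), combined with the simple observation that every algebraic Witt vector $\xi \in E_K$ factors canonically through its finite quotient $DR_K \twoheadrightarrow D_\xi$. By hypothesis $E = E_D$, so both directions of the claimed equivalence are really statements comparing profinite quotients of $DR_K$, and we work throughout under the identification $E_K \simeq \Hom_{G_K}(DR_K, K^{ab})$.

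For the forward implication, I would suppose $\xi \in E = E_D$, so that $\xi: DR_K \to K^{ab}$ factors through the given quotient $\pi: DR_K \twoheadrightarrow D$ as $\xi = \bar{\xi} \circ \pi$. For any $\ida, \idb \in I_K$ with $\pi(\ida) = \pi(\idb)$ and any $x \in DR_K$, one then has $\pi(\ida x) = \pi(\idb x)$, hence $\psi_\ida \xi(x) = \xi(\ida x) = \xi(\idb x) = \psi_\idb \xi(x)$; that is, $\ida \sim_\xi \idb$. Thus the congruence on $I_K$ induced by $\pi$ is finer than $\sim_\xi$, and passing to quotients yields a surjective monoid homomorphism from the image of $I_K$ in $D$ onto $D_\xi = I_K / \sim_\xi$. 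Since $\xi$ is locally constant, $D_\xi$ is finite, and the image of $I_K$ is dense in the profinite monoid $D$, so this map extends uniquely by continuity to a surjection $D \twoheadrightarrow D_\xi$.

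For the converse, assuming $D_\xi$ is a quotient of $D$, I would first check that $\xi$ itself factors through the canonical surjection $DR_K \twoheadrightarrow D_\xi$: setting $x = 1$ in the defining identity $\psi_\ida \xi = \psi_\idb \xi$ shows that $\xi|_{I_K}$ is constant on $\sim_\xi$-classes, and by density of $I_K$ in $DR_K$ together with continuity of $\xi$ the full function factors as claimed. Composing with the hypothesized surjection $D \twoheadrightarrow D_\xi$, one concludes that $\xi$ factors through $DR_K \twoheadrightarrow D$, hence $\xi \in E_D = E$.

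The main (mild) technical obstacle is the extension-by-continuity step in the forward direction: one needs the surjection from $\pi(I_K) \subseteq D$ onto $D_\xi$ to extend to all of $D$. I would handle this by observing that the kernel equivalences of the continuous surjections $\pi: DR_K \twoheadrightarrow D$ and $DR_K \twoheadrightarrow D_\xi$ are both closed equivalence relations on $DR_K \times DR_K$, so the containment of kernels established on the dense subset $I_K \times I_K$ automatically propagates to the full product; equivalently, this is the universal property of the profinite completion of a dense discrete submonoid applied to the finite target $D_\xi$. Beyond this, the argument is formal from the definitions of $E_D$ and $D_\xi$ and the preceding two lemmas.
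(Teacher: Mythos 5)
Your overall strategy (produce a surjection $D \twoheadrightarrow D_\xi$ in the forward direction; compose maps in the converse) is sound, and the converse direction is correct. But the forward direction has a genuine gap in the step you yourself flag as the "main (mild) technical obstacle," and your proposed remedy does not work.

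The issue: you establish $\ker(\pi|_{I_K}) \subseteq \ker(q|_{I_K})$ where $q: DR_K \twoheadrightarrow D_\xi$ is the canonical map, and you claim that because both kernels are closed and $I_K$ is dense, the containment "automatically propagates" to all of $DR_K$. This is false in general. A counterexample to the abstract claim: take $DR_K$-like $\widehat{\integer}$ with dense $\integer$, let $\pi$ be the projection to $\integer_2$ and $q$ the projection to $\integer/3\integer$. Then $\pi|_\integer$ is injective, so $\ker(\pi) \cap (\integer \times \integer)$ is the diagonal and trivially lies in $\ker(q) \cap (\integer \times \integer)$, yet $\ker(\pi) \not\subseteq \ker(q)$ on $\widehat{\integer}$. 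The point is that $\ker(\pi) \cap (I_K \times I_K)$ need not be dense in $\ker(\pi)$, so closedness of the two kernels does not let you propagate the containment. The appeal to a "universal property of the profinite completion" has the same problem: $D$ is not the profinite completion of $\pi(I_K)$, only a profinite monoid in which it embeds densely, and that weaker hypothesis does not furnish the extension.

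Fortunately, the fix is small and in fact renders the density step unnecessary. Your computation $\pi(\ida) = \pi(\idb) \Rightarrow \xi(\ida x) = \xi(\idb x)$ works verbatim with $\ida, \idb$ ranging over all of $DR_K$, not just $I_K$: for $x, y \in DR_K$ with $\pi(x) = \pi(y)$ one has $\xi(xz) = \bar\xi(\pi(x)\pi(z)) = \bar\xi(\pi(y)\pi(z)) = \xi(yz)$ for every $z \in DR_K$. Since the kernel of $q: DR_K \twoheadrightarrow D_\xi$ is precisely the relation $\{(x,y) : \xi(xz) = \xi(yz) \text{ for all } z\}$ (it is a clopen monoid congruence on $DR_K$ whose restriction to $I_K$ is $\sim_\xi$, and $I_K \twoheadrightarrow D_\xi$ is already surjective), this shows $\ker(\pi) \subseteq \ker(q)$ directly, yielding the surjection $D \twoheadrightarrow D_\xi$. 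Alternatively, and more in the spirit of the paper: since $\xi \in E_D$, the definition $D_{E_D} = \lim_{\Xi \subseteq E_D} D_\Xi$ makes $D_\xi = D_{\{\xi\}}$ a quotient of $D_{E_D}$ by construction, and the second lemma gives $D \simeq D_{E_D}$; this sidesteps the extension issue entirely.
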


%%%%%%
\subsection{Characterization of modular vectors}
\label{s4.2}
Since the modular vectors constitute a $\Lambda$-subalgebra $M_K$ of $E_K$, the general galois correspondence (Proposition \ref{galois correspondence}) guarantees that we have a profinite quotient $DR_K \twoheadrightarrow D_{M_K}$ so that an algebraic Witt vector $\xi \in E_K$ is modular if and only if $D_\xi$ is a finite quotient of $D_{M_K}$; that said, nevertheless, the general galois correspondence gives us only a somewhat tautological description of $D_{M_K}$, i.e.\ $D_{M_K} = \lim_{\Xi \subseteq M_K} D_\Xi$. In view of this issue, this subsection gives in a step-by-step manner a purely adelic description of this profinite monoid $D_{M_K}$, which is intrinsic for $K$ and independent of modular vectors $\Xi \subseteq M_K$. 

As the first step, we define finite quotients $DR_K \twoheadrightarrow D_N$ for $N \geq 1$ using projective embeddings of complex tori so that $D_N$'s constitute an inverse system of finite monoids whose inverse limit is isomorphic to our target profinite monoid $D_{M_K}$ (Lemma \ref{characterization of sim N}); this description of $D_{M_K}$ is not yet intrinsic for $K$ but we utilize it to study the structure of $D_{M_K}$ via geometry of abelian varieties. Based on this, as the second step, we describe the structure of these finite monoids $D_N$ in a purely adelic term, which provides us a $K$-intrinsic description of $D_{M_K} \simeq \lim_N D_N$ and concludes our dual ($K$-intrinsic) characterization of modular vectors (cf.\ Theorem \ref{main result}).

\begin{defn}[congruence $\sim_N$]
For $[\rho, s], [\lambda, t] \in DR_K$, we define $[\rho, s] \sim_N [\lambda, t]$ if and only if the following two parallel arrows $E/O_E \rightarrow \proj^d(\comp)$ coincide on the subset $N^{-1}O_E/O_E \subseteq E/O_E$:
\begin{equation}
\label{defining diagram of congruence N}
\xymatrix{ 
& & E/\eta(s)O_E \ar[r]^{\Phi} & \comp^g/\Lambda_s \ar[r]^{\Theta_s} & A_s  \ar[rd]^{\subseteq}& \\
E/O_E \ar@<-0.1cm>[r]_{\eta(\lambda)} \ar@<0.1cm>[r]^{\eta(\rho)} & E/O_E \ar[ru]^{\eta(s)} \ar[rd]_{\eta(t)} & & & & \proj^d(\comp) \\
 & & E/\eta(t)O_E \ar[r]_\Phi & \comp^g/\Lambda_t \ar[r]_{\Theta_t} & A_t \ar[ru]_{\subseteq} & \\
}
\end{equation}
\end{defn}

\begin{lem}
The congruence $\sim_N$ is a monoid congruence on $DR_K$ of finite order. 
\end{lem}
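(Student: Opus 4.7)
My plan is to split the statement into two independent claims: (i) that $\sim_N$ is multiplicatively compatible with the (commutative) monoid structure on $DR_K$, and (ii) that the quotient $DR_K/\sim_N$ is finite.

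For (i), reflexivity, symmetry, and transitivity of $\sim_N$ are immediate from its definition as coincidence of two parallel arrows. The substantive content is compatibility with multiplication: given $[\rho_1, s_1] \sim_N [\lambda_1, t_1]$ and any $[\rho_2, s_2] \in DR_K$, one must show $[\rho_1 \rho_2, s_1 s_2] \sim_N [\lambda_1 \rho_2, t_1 s_2]$. My approach will be to first reformulate the diagram condition by invoking the main theorem of complex multiplication (Theorem~\ref{cm for abelian variety}) exactly as in the proof of Proposition~\ref{basic property}: the composite $N^{-1}O_E/O_E \to \proj^d(\comp)$ associated to $[\rho, s]$ sends $a$ to the image $[\Theta_E \circ \Phi(a\,\eta(\rho))]^{[s]^{-1}}$, namely a Galois conjugate of a torsion-point coordinate of $A_E$. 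Then, setting $b := a\,\eta(\rho_2)$, which again lies in $N^{-1}O_E/O_E$ because $\eta(\rho_2) \in \widehat{O}_E$ preserves the $N$-torsion subgroup, and cancelling the common Galois factor $[s_2]^{-1}$ from both sides (using commutativity of $G_K^{ab}$), the desired equality reduces exactly to the hypothesis $[\rho_1, s_1] \sim_N [\lambda_1, t_1]$ evaluated at $b$.

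For (ii), I would appeal to Proposition~\ref{basic property}, which shows that every modular vector $\widehat{f}_a$ with $a \in N^{-1}\integer^{2g}/\integer^{2g}$ factors through the finite quotient $DR_K \twoheadrightarrow DR_{NO_K}$. Since whether the two arrows in diagram~(\ref{defining diagram of congruence N}) coincide on $N^{-1}O_E/O_E$ is detected entirely by the values of such modular vectors (the projective coordinates $\theta^k$ and their ratios at the relevant torsion points are exactly of this form), the relation $\sim_N$ is coarser than the congruence defining $DR_{NO_K}$. Hence $DR_K/\sim_N$ is a further quotient of the finite monoid $DR_{NO_K}$, and is therefore itself finite.

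The only genuinely non-routine step is the multiplicativity identity in (i). The key simplification is that the main theorem of complex multiplication turns the assignment $[\rho,s] \mapsto (\text{composite arrow on }N\text{-torsion})$ into the composite of a coordinate multiplication by $\eta(\rho)$ and a Galois twist by $[s]^{-1}$; both operations commute cleanly past the hypothesized equality, so I expect no serious obstacle beyond careful bookkeeping of how the reflex norm and the Artin reciprocity map intertwine. Part (ii) is then essentially a corollary of the already-established finiteness of the Deligne--Ribet quotient $DR_{NO_K}$.
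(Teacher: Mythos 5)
Your proof is correct and follows essentially the same route as the paper: the multiplicativity is established by rewriting the defining composite as $a \mapsto (\Theta_E\Phi(a\,\eta(\rho)))^{[s]^{-1}}$ via the main theorem of complex multiplication and then translating by $\eta(\rho_2)$ and twisting by $[s_2]^{-1}$ (the paper presents exactly this as a large commutative diagram built from the CM theorem applied to $\ida=\eta(s)O_E$ and $\ida=\eta(t)O_E$), and finiteness is deduced, as in the paper, from the fact that these composites are controlled by $N$-torsion so that $\sim_N$ factors through the finite quotient $DR_K\twoheadrightarrow DR_{NO_K}$ established in the last part of Proposition~\ref{basic property}.
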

\begin{proof}
First to see that $\sim_N$ is a monoid congruence, it suffices to prove that $[\rho, s] \sim_N [\lambda, t]$ implies $[\rho \nu, s r] \sim_N [\lambda \nu, t r]$ for any $[\nu, r] \in DR_K$. But this follows from the following larger commutative diagram, where we used Shimura's reciprocity law (cf.\ \S \ref{s2.2.3}) for the fractional ideals $\ida = \eta(s)O_E$ and $\ida = \eta(t)O_E$ in particular; note also that $\eta(\nu): E/O_E \rightarrow E/O_E$ maps the subset $N^{-1}O_E/O_E$ to itself:
\begin{equation}
\xymatrix@C=15pt{ 
& & & E/\eta(sr)O_E \ar[r]^{\Phi} & \comp^g/\Lambda_{sr} \ar[r]^{\Theta_{sr}} & A_{sr}  \ar[rrdd]^{\subseteq}& &  \\
& & & E/\eta(s)O_E \ar[r]^{\Phi} \ar[u]_{\eta(r)} & \comp^g/\Lambda_s \ar[r]^{\Theta_s} & A_s  \ar[u]^{[r]^{-1}} \ar[rd]^{\subseteq}& & \\
E/O_E \ar[r]^{\eta(\nu)}& E/O_E \ar@<-0.1cm>[r]_{\eta(\lambda)} \ar@<0.1cm>[r]^{\eta(\rho)} & E/O_E \ar[ru]_{\eta(s)} \ar[rd]^{\eta(t)}  \ar[ruu]^{\eta(sr)} \ar[rdd]_{\eta(tr)} & & & & \proj^d(\comp) \ar[r]^{[r]^{-1}} & \proj^d(\comp) \\
& & & E/\eta(t)O_E \ar[r]_\Phi \ar[d]^{\eta(r)} & \comp^g/\Lambda_t \ar[r]_{\Theta_t} & A_t \ar[d]_{[r]^{-1}} \ar[ru]_{\subseteq} & & \\
& & & E/\eta(tr)O_E \ar[r]_\Phi & \comp^g/\Lambda_{tr} \ar[r]_{\Theta_{tr}} & A_{tr} \ar[rruu]_{\subseteq} & & 
}
\end{equation}
Also, the fact that the monoid congruence $\sim_N$ is of finite index follows from the same argument in the last part of the proof of Proposition \ref{basic property}. This completes the proof. 
\end{proof}

For $N \mid N'$, it is obvious from the definition that $[\rho, s] \sim_{N'} [\lambda, t]$ implies $[\rho, s] \sim_N [\lambda, t]$; therefore, writing $D_N := DR_K / \sim_N$, we have a surjective homomorphism $r^{N'}_N: D_{N'} \twoheadrightarrow D_N$ of finite monoids for $N \mid N'$. This way we obtain an inverse system $\{D_N, r^{N'}_N\}$ of finite monoids $D_N$, whose inverse limit is in fact isomorphic to $D_{M_K}$. 

\begin{lem}
\label{characterization of sim N}
We have a canonical isomorphism of profinite monoids, $D_{M_K} \simeq \lim_{N \geq 1} D_N$. 
\end{lem}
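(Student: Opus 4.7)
The plan is to apply the general Galois correspondence of Proposition \ref{galois correspondence}: since two profinite quotients of $DR_K$ coincide if and only if they induce the same monoid congruence on $DR_K$, it suffices to show that, for $x, y \in DR_K$, one has $\xi(x) = \xi(y)$ for every $\xi \in M_K$ if and only if $x \sim_N y$ for every $N \geq 1$. Because $M_K$ is generated over $K$ by the modular vectors $\widehat{f}_a$ and the set of $\xi$ satisfying $\xi(x)=\xi(y)$ is a $K$-subalgebra, the former condition reduces to checking $\widehat{f}_a(x) = \widehat{f}_a(y)$ for every admissible pair $(a, f)$.

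Unfolding Definition \ref{defn of modular vector} and the diagram (\ref{defining diagram of congruence N}) defining $\sim_N$, let $P_a(x), Q_a(y) \in \proj^d(\comp)$ denote the images of the torsion point $a \in N^{-1}O_E/O_E$ under the top and bottom compositions of (\ref{defining diagram of congruence N}) with representatives of $x$ and $y$; then by construction $\widehat{f}_a(x) = f(P_a(x))$ and $\widehat{f}_a(y) = f(Q_a(y))$. The implication ``$x \sim_N y$ for every $N$ $\Rightarrow$ $\xi(x) = \xi(y)$ for every $\xi \in M_K$'' is then immediate: if $a \in N^{-1}\integer^{2g}/\integer^{2g}$ and $x \sim_N y$, then $P_a(x) = Q_a(y)$ by definition of $\sim_N$, so $\widehat{f}_a(x) = f(P_a(x)) = f(Q_a(y)) = \widehat{f}_a(y)$.

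For the converse I would argue by contraposition. Suppose $x \not\sim_N y$ for some $N$; then there exists $a \in N^{-1}\integer^{2g}/\integer^{2g}$ with $P_a(x) \neq Q_a(y)$ in $\proj^d(\comp)$. The orbit $\{a \cdot \eta(\rho) \mid \rho \in \widehat{O}_K\}$ factors through the finite quotient $\widehat{O}_K/N\widehat{O}_K$, hence consists of finitely many elements and produces only finitely many points $P_{a'}(x), Q_{a'}(y) \in \proj^d(\comp)$. One then selects homogeneous polynomials $g, h$ of the same degree in the $\theta^k$'s, with coefficients in $\ratf$, such that $h$ vanishes at none of these finitely many points and such that $f := g/h$ satisfies $f(P_a(x)) \neq f(Q_a(y))$; this is a routine Zariski-density argument using that linear forms already separate any two distinct points of $\proj^d(\comp)$ and that the ``bad'' conditions on $(g, h)$ cut out proper $\ratf$-linear subspaces of the finite-dimensional spaces of homogeneous forms. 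By construction $f$ is defined for $a$ in the sense of Definition \ref{defn of modular vector}, so the modular vector $\widehat{f}_a \in M_K$ distinguishes $x$ and $y$.

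Combining the two implications shows that $\bigcap_{N \geq 1} \sim_N$ coincides with the monoid congruence on $DR_K$ induced by $M_K$ under the Galois correspondence, yielding the desired isomorphism $D_{M_K} \simeq \lim_{N \geq 1} D_N$. I expect the main care to be needed in the separation step of the contrapositive, since one must simultaneously ensure that $f$ is admissible (defined for $a$) and separates $P_a(x)$ from $Q_a(y)$; once both constraints are recognized to cut out only finitely many $\ratf$-linear conditions on a finite-dimensional space of homogeneous forms, this becomes a standard non-vanishing argument and is the only non-formal verification in the plan.
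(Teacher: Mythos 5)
Your proposal is correct and takes essentially the same approach as the paper: both reduce the claim to showing that $\bigcap_{N\geq 1}{\sim_N}$ equals the congruence cut out by $M_K$, and both handle the nontrivial direction by the same separation step (producing a rational function over $\ratf$ that separates the two distinct projective image points while remaining defined at the finitely many relevant torsion points in the $\eta(\widehat{O}_K)$-orbit). The only presentational difference is that the paper first verifies that $X_N$ is a galois object dual to $D_N$ before taking the limit, whereas you compare the kernel congruences of the two profinite quotients directly.
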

\begin{proof}
Let $X_N:= \{\widehat{f}_a \mid a \in N^{-1} O_E/ O_E, \textrm{$f$ is rational over $K$ and defined for $a$}\} \subseteq M_K$; since $M_K$ is the union of $X_N$'s, it suffices to prove that $X_N$ is the galois object of $\C_K$ dual to $D_N$. First, by the fact that all $\widehat{f}_a$ in $X_N$ as functions on $DR_K$ factor through $DR_{NO_K}$, it follows that $X_N$ is finite etale over $K$; also, by the same argument of Lemma 4.3.5 \cite{Uramoto20}, it can be readily seen that $X_N$'s are closed under $\psi_\idp$'s. From these we deduce that $X_N = X_{\Xi(N)}$ for some finite set $\Xi(N) \subseteq X_N \subseteq M_K$, hence $X_N$ is a galois object of $\C_K$. 

The rest task is to prove that this galois object $X_N$ is dual to $D_N$; since we already know that $X_N$ is galois, this is equivalent to proving that  for any $[\rho, s], [\lambda, t] \in DR_K$, we have $[\rho, s] \sim_N [\lambda, t]$  if and only if the equality $\widehat{f}_a(\rho, s) = \widehat{f}_a(\lambda, t)$ holds for all $a \in N^{-1}O_E/O_E$ and rational functions $f$ over $K$ defined for $a$. 

The only-if part is almost trivial: notice from the definition that $[\rho, s] \sim_N [\lambda, t]$ means that each $N$-torsion point $a_1 \tau_E + a_2 \delta \in N^{-1}O_E / O_E$ is mapped to the same point in $\proj^d(\comp)$ under the two maps $E/O_E \rightarrow \proj^d(\comp)$ given in (\ref{defining diagram of congruence N}); therefore, their images of $a_1 \tau_E + a_2 \delta$ cannot be distinguished by any rational function $f$ defined on such torsion points, which mean that we have $\widehat{f}_a(\rho, s) = \widehat{f}_a(\lambda, t)$ for any $a \in N^{-1}O_E/O_E$ and rational functions $f$ over $K$ defined for $a$. (Hence $X_N$ is contained in the galois object $\Hom_{G_K}(D_N, K^{ab})$ dual to $D_N$.)

Conversely, suppose $\widehat{f}_a(\rho, s) = \widehat{f}_a(\lambda, t)$ for any $a \in N^{-1}O_E/O_E$ and rational functions $f$ over $K$ defined for $a$; this means that each of the $N$-torsion points $a_1 \tau_E + a_2 \delta \in N^{-1}O_E/O_E$ is mapped to the points, say $p_{\rho,s}(a), p_{\lambda,t}(a) \in \proj^d(\comp)$, by the two maps in (\ref{defining diagram of congruence N}) respectively so that they cannot be distinguished by any such rational functions $f$. We shall prove that, if these points $p_{\rho,s}(a), p_{\lambda,t}(a)$ are distinct for some $a \in N^{-1}O_E/O_E$, they can be distinguished by some rational function $f$ over $K$ defined for $a$: Since we suppose $p_{\rho, s}(a) \neq p_{\lambda, t}(a) \in \proj^d(\comp)$, we can take some rational function $f_1: \proj^d(\comp) \rightarrow \proj^1(\comp)$ defined over $\ratf$ such that $f_1(p_{\rho,s}(a))\neq f_1(p_{\lambda,t}(a)) \in \proj^1(\comp)$. Then consider the finite subset $S \subseteq \proj^1(\comp)$ consisting of $f_1(p_{\rho,s}(a'))$ and $f_1(p_{\lambda,t}(a'))$ for all $a' \in N^{-1}O_E/O_E$, which (of course) contains $f_1(p_{\rho,s}(a))$ and $f_1(p_{\lambda,t}(a))$. If $S$ does not contain $\infty \in \proj^1(\comp)$, the original $f_1$ already gives our desired one; otherwise, take a rational function $h: \proj^1(\comp) \rightarrow \proj^1(\comp)$ over $\ratf$ so that all the points in $S \subseteq \proj^1(\comp)$ are mapped injectively into $\proj^1(\comp)\setminus \{\infty\}$; then $h \circ f_1: \proj^d(\comp) \rightarrow \proj^1(\comp)$ is our desired one. This completes the proof. 
\end{proof}

From this, it follows that an algebraic Witt vector $\xi \in E_K$ is modular (i.e.\ $\xi \in M_K$) if and only if the finite monoid $D_\xi$ is a quotient of $D_N$ for some $N \geq 1$. Recall that $D_\xi$ is the quotient finite monoid $D_\xi :=I_K / \sim_\xi$, where the congruence $\sim_\xi$ is defined as $\ida \sim_\xi \idb$ if and only if $\psi_\ida \xi = \psi_\idb \xi$, or in more elementary words, $\xi_{\ida \idc} = \xi_{\idb \idc}$ for any $\idc \in I_K$. That is, the congruence $\sim_\xi$ on $I_K$ concerns only the pattern of whether or not the components $\xi_{\ida \idc}, \xi_{\idb \idc}$ coincide for all $\idc$, and does not concern their actual values. The real ingredient of this dual characterization is that whether $\xi \in E_K$ is realized as a modular vector $\xi = \widehat{f}_a$ analytically can be determined only by such a pattern of the equality between the components $\xi_\ida$ of $\xi$. 

Nevertheless, the definition of the congruence $\sim_N$ depends on theta functions $\Theta_s$ and thus the finite monoid $D_N = I_K / \sim_N$ is not yet intrinsic for $K$. The subject of the rest of this section is to remedy this issue, giving a purely adelic description of the structure of $D_N$, for which the following lemma is our key result. In what follows, for each subset $S \subseteq O_E/NO_E$, let us denote by $A_E[S]$ the $N$-torsion points on $A_E$ that correspond to $S$ via the isomorphism $A_E[N] \simeq O_E/NO_E$; in particular, for each $\rho \in \widehat{O}_K$, we denote by $S_\rho \subseteq O_E/NO_E$ the image of the map $\eta(\rho): O_E/NO_E \rightarrow O_E/NO_E$.

\begin{lem}
\label{key lemma}
For any $[\rho, s], [\lambda, t] \in DR_K$, we have $[\rho, s] \sim_N [\lambda, t]$ if and only if there exists some $w \in  (O_E/NO_E)^\times$ satisfying the following two conditions:
\begin{enumerate}
\item $\eta(\rho) w = \eta(\lambda)$ in $O_E/N O_E$; 
\item the action of $[s^{-1}t] \in G_K^{ab}$ on $A_E[S_\rho]$ is equal to that of $w$. 
\end{enumerate}
where the action of $w \in (O_E/NO_E)^\times$ on $A_E[N]$ is given by the multiplication $A_E [N] \simeq O_E/NO_E \ni a \mapsto a \cdot w \in O_E/N O_E \simeq A_E[N]$. 
\end{lem}
\begin{proof}
We first prove the if part; suppose that the above two conditions hold for $[\rho, s], [\lambda, t] \in DR_K$ and $w \in (O_E/NO_E)^\times$, from which we deduce $[\rho, s] \sim_N [\lambda, t]$. Since $\eta(\rho)\cdot w = \eta(\lambda) \in O_E / NO_E$, we have the following equalities for any $k, l \in \delta^{-1}\integer^g/\integer^g$ and $a \in N^{-1}O_E/ O_E$:
\begin{eqnarray}
 \widehat{\theta}^{k/l}_a (\lambda, t) &=& \biggl(\theta^{k/l}_{a\eta(\lambda)} (\tau_E)\biggr)^{[t]^{-1}} \\
 &=& \biggl( \theta^{k/l}_{a \eta(\rho) w} (\tau_E) \biggr)^{[t]^{-1}}
\end{eqnarray}
where $\widehat{\theta}^{k/l}(\rho, s)$ is defined as in Definition \ref{defn of modular vector}. But now by $a\eta(\rho) \in S_\rho$ and by the second condition, this last one is equal to $\widehat{\theta}^{k/l}_a (\rho, s)$, thus:
\begin{eqnarray}
 \widehat{\theta}^{k/l}_a (\lambda, t) &=& \widehat{\theta}^{k/l}_a (\rho, s). 
\end{eqnarray}
Since $a \in N^{-1}O_E/O_E$ and $k, l \in \delta^{-1}\integer^g/\integer^g$ are arbitrary, this implies that $[\rho, s] \sim_N [\lambda, t]$. 

Conversely, we prove the only-if part: suppose that we have $[\rho, s] \sim_N [\lambda, t]$, from which we shall construct $w \in (O_E/NO_E)^\times$ satisfying the above two conditions. To this end, we note that the first condition almost immediately implies the second one: in fact, if we have $\eta(\rho) w = \eta(\lambda)$, the above argument shows $(\theta^{k/l}_{a\eta(\rho)} (\tau_E))^{[s^{-1}t]} = (\theta^{k/l}_{a \eta(\rho) w}(\tau_E))$ for any $k, l \in \delta^{-1}\integer^g/\integer^g$ and $a \in N^{-1}O_E/O_E$; this means that $[s^{-1}t] \in G_K^{ab}$ defines the same action on $A_E[S_\rho]$ as $w$, hence the second condition is satisfied. Therefore it suffices to prove that we have $w \in (O_E/NO_E)^\times$ satisfying the first condition.

We prove this by essentially the same idea of our argument in \S 4.3 \cite{Uramoto20}: Note that $[\rho, s] \sim_N [\lambda, t]$ implies the equality $(\eta(\rho)O_E, NO_E) = (\eta(\lambda)O_E, NO_E)$ of the greatest common divisors. Indeed, by the definition of the congruence $\sim_N$, the assumption $[\rho, s] \sim_N [\lambda, t]$ implies the following equalities for all $a \in N^{-1}O_E/O_E$ and $k, l \in \delta^{-1}\integer^g/\integer^g$:
\begin{eqnarray}   
 \theta^{k/l}_{a\eta(\rho)} (\tau_E) &=& \biggl( \theta^{k/l}_{a \eta(\lambda)} (\tau_E) \biggr)^{[st^{-1}]}. 
\end{eqnarray}
In particular, applying this equality to the case of $a=0$ and using a decomposition $\eta(st^{-1}) = u \cdot \alpha$ (cf.\ Lemma \ref{decomposition}) together with Shimura's reciprocity law, we have:
\begin{eqnarray}
 \theta^{k/l}_0 (\tau_E) &=&  \theta^{k/l}_0 (\alpha(\tau_E)), 
\end{eqnarray}
for all $k, l \in \delta^{-1}\integer^g/\integer^g$. With this in mind, suppose now that $a \eta(\lambda) = 0$ for some $a \in N^{-1}O_E/O_E$; then we can show $a \eta(\rho) = 0$ for this $a$ as well. In fact, for any $k, l \in \delta^{-1} \integer^g/\integer^g$:
\begin{eqnarray}
 \theta^{k/l}_{a \eta(\rho)} (\tau_E)  &=& \theta^{k/l}_0 (\alpha(\tau_E))\\
 &=& \theta^{k/l}_0 (\tau_E);
\end{eqnarray}
thus the injectivity of $\Theta_{\tau_E}: \comp^g/\Lambda_E \hookrightarrow \proj^d(\comp)$ implies $a \eta(\rho) = 0$. The converse argument then shows that $a\eta(\rho) = 0$ if and only if $a \eta(\lambda) = 0$ for all $a \in O_E/NO_E$, which means that $(\eta(\rho)O_E, NO_E) = (\eta(\lambda)O_E, NO_E)$ as desired, and equivalently, there exists some $w \in (O_E/ NO_E)^\times$ such that $\eta(\rho) w = \eta(\lambda)$ in $O_E/NO_E$, hence the claim. 
\end{proof}

Next we characterize, in terms of an adelic condition, when $[s^{-1}t] \in G_K^{ab}$ defines the same action on subsets  of $A_E[N]$ as that of $w \in (O_E/NO_E)^\times$, following the ideas of Theorem 2.5, Streng \cite{Streng} and Corollary 18.9, Shimura \cite{Shimura}. In what follows we suppose that our CM-type $(E, \Phi)$ is \emph{primitive}, whence $\End_\ratf(A_E) \simeq E$ (cf.\ \S 8.2, \cite{Shimura_abelian}):

\begin{lem}
\label{adelic description of galois group}
For each $S \subseteq (O_E/NO_E)$, the galois action of $[s]^{-1} \in G_K^{ab}$ on $A_E[S]$ is equal to that of $w \in (O_E/NO_E)^\times$ on $A_E[S]$ if and only if $s$ is a member of the following subset $T_{S, w} \subseteq \Ad_{K,f}^\times$:
\begin{equation}
T_{S, w} := \bigl\{ s \in \Ad_{K,f}^\times \mid \exists r \in E^\times, \hspace{0.1cm}  r r^\iota = N(s O_K), \hspace{0.1cm} r O_E = \eta(s) O_E , \hspace{0.1cm} r \cdot a \cdot w = a \cdot \eta(s) \hspace{0.1cm} (\forall a \in S)  \bigr\}. 
\end{equation}
\end{lem}
\begin{proof}
First, suppose that $s \in \Ad_{K,f}^\times$ defines the same action as $w \in (O_E/NO_E)^\times$ on $A_E[S]$. Then by definition, we have $(\theta^{k/l}_a (\tau_E))^{[s]^{-1}} = \theta^{k/l}_{aw} (\tau_E)$ for any $k, l \in \delta^{-1}\integer^g/\integer^g$ and $a \in S_\rho$. Applying this to the case of $a = 0$ in particular, we obtain: 
\begin{eqnarray}
 \theta_0^{k/l} (\alpha_s(\tau_E)) &=& \theta^{k/l}_0 (\tau_E),
\end{eqnarray}
for any $k, l \in \delta^{-1}\integer^g/\integer^g$. By Proposition 8.10.2 \cite{Birkenhake_Lange} claiming that the theta null values preserve the isomorphism classes of abelian varieties, we see that $A_{\tau_E}$ and $A_{\tau_s}$ are isomorphic. Thus we obtain an isomorphism $h: \comp^g/ \Phi(O_E) \rightarrow \comp^g/\Phi(\eta(s)O_E)$ of complex tori, preserving their polarizations, which is given by the multiplication of some $r \in E^\times$ such that $r O_E = \eta(s)O_E$ and $\Theta_E (\Phi(a')) = \Theta_s (\Phi(ra'))$ for all $a' \in E/O_E$. Now for any $a \in S \subseteq N^{-1}O_E/O_E$ in particular, the assumption on $s$ implies the identities $\Theta_s (\Phi(r a w)) = \Theta_E (\Phi(aw)) = \Theta_E (\Phi(a))^{[s]^{-1}} = \Theta_s (\Phi (a \cdot \eta(s)))$; therefore, by the injectivity of $\Theta_s: \comp^g/\Phi(\eta(s)O_E) \rightarrow A_s$, we have $r \cdot a \cdot w = a \cdot \eta(s)$. Moreover, since the isomorphism $h$ preserves the polarizations, it follows by Theorem 18.6 \cite{Shimura_abelian} that $r r^\iota = N(sO_K)$, hence $s \in T_\rho$. Conversely let $s \in T_{S, w}$ and take $r \in E^\times$ as in the definition of $T_{S, w}$. Then by definition, the multiplication by $r$ defines an isomorphism $h: \comp^g/\Phi(O_E) \rightarrow \comp^g/\Phi(\eta(s)O_E)$ of complex tori in such a manner that we have the identities $\Theta_E(\Phi(aw)) = \Theta_s(\Phi(r a w))= \Theta_s(\Phi(a \cdot \eta(s))) = \Theta_E(\Phi(a))^{[s]^{-1}}$ for each $a \in S$, which means that the galois action of $[s]^{-1}$ on $A_E[S]$ is equal to that of $w$, as required.  
\end{proof}

Now we conclude our target adelic description of the finite monoids $D_N$: 

\begin{thm}[adelic description of $D_N$]
\label{main result}
For any $[\rho, s], [\lambda, t] \in DR_K$, we have $[\rho, s] \sim_N [\lambda, t]$ if and only if there exists some $w \in (O_E/NO_E)^\times$ satisfying the following two conditions:
\begin{enumerate}
 \item $\eta(\rho) w = \eta(\lambda)$ in $N^{-1}O_E/O_E$; 
 \item $s^{-1} t \in T_{\rho, w} = T_{\lambda, w}$. 
\end{enumerate}
where $T_{\rho, w}, T_{\lambda, w} \subseteq \Ad_{K,f}^\times$ denote $T_{S_\rho, w}, T_{S_\lambda, w}$ respectively, which coincide under the first condition. 
\end{thm}
\begin{proof}
This follows from Lemma \ref{key lemma} and Lemma \ref{adelic description of galois group}. In particular, note that the first condition implies $S_\rho = S_\lambda$.
\end{proof}

\begin{rem}
When $E$ is imaginary quadratic, the above results readily imply yet another form of the modularity theorem (cf.\ Theorem 4.3.1 \cite{Uramoto20}); that is, the modular vectors constructed by ratios of theta functions also exhaust all algebraic Witt vectors as in the case of those constructed by Fricke functions. 
To see this it suffices to show that $[\rho, s] \sim_N [\lambda, t]$ if and only if $[\rho, s]=[\lambda, t] \in DR_{NO_K}$, in other words, $D_N$ is isomorphic to $DR_{NO_K}$. 

Since $E$ is imaginary quadratic, note first that its reflex field $K=E^*$ coincides with $E$ itself; also the reflex norm $\eta: \Ad_{K, f} \rightarrow \Ad_{E, f}$ is the identity. Then Theorem \ref{main result} says that $[\rho, s] \sim_N [\lambda, t]$ if and only if there exists some $u \in (O_K/NO_K)^\times$ such that $\rho u = \lambda$ in $O_K/NO_K$ and $s^{-1}t \in T_{\rho, u}$; but as in the proof of Lemma \ref{key lemma}, one can also see that $s^{-1}t \in T_{\rho, u}$ if and only if the actions of $[s]$ and $[ut]$ on the subset $A_K[S_\rho]$ of the $N$-torsion points on the elliptic curve $A_K$ coincide, or equivalently, $[s] = [ut]$ in the ray class group $C_{\idd_\rho}$ for the conductor $\idd_\rho := NO_K/ (\rho O_K, NO_K)$. That is, the finite monoid $D_N$ is isomorphic to $(O_K/NO_K) \times_{(O_K/NO_K)^\times} C_{NO_K} = DR_{NO_K}$, hence the claim. 
\end{rem}

In this sense, Theorem \ref{main result} can be seen as a natural generalization of our previous modularity theorem for algebraic Witt vectors over imaginary quadratic fields (Theorem 4.3.1 \cite{Uramoto20}) to the case of (primitive) CM fields; in particular, we showed that the use of ratios of theta functions instead of Fricke functions enables this higher-dimensional extension in a natural way, and also highlights the geometric nature of the modularity theorem in the sense that we discussed in \S \ref{s3}.

% include when publication (if necessary in the chosen journal?) 
% \paragraph{Conflicts of interests}
% The author declares that there is no conflict of interests. 

% \paragraph{Data availability}
% This work as well as our previous works was done purely theoretically and has no associated data. 

%%%%%%%%%%%%%%%%%%%%%%%%%%%%%%%%%%
\bibliographystyle{abbrv}
\bibliography{semigalois4}
%%%%%%%%%%%%%%%%%%%%%%%%%%%%%%%%%%
\end{document}